\definecolor{modColorNew}{rgb}{01,0,0}
\definecolor{modColorOld}{rgb}{0,1,0}
\font\dixmath=cmsy10
\renewcommand{\aa}[2]{a_{#1, #2}}
\newcommand{\abs}[1]{\vert#1\vert}
\newcommand{\BB}[1]{B_{#1}}
\newcommand{\bidual}{^{\hspace{-0.05em}\raise0.6pt\hbox{$\scriptscriptstyle+$}\hspace{-0.05em}*}}
\newcommand{\BKL}[1]{B_{#1}\bidual}
\newcommand{\SBKL}[1]{A_{#1}}
\newcommand{\BP}[1]{B_{#1}\plusexp}
\newcommand{\SBP}[1]{\Sigma_{#1}}
\newcommand{\br}{\beta}
\newcommand{\brbr}{\gamma}
\newcommand{\brr}{\beta'}
\newcommand{\brrr}{\beta''}
\newcommand{\compL}{f_{\hspace{-0.2em}L}^\nn}
\newcommand{\compR}{f_{\hspace{-0.2em}R}^\nn}
\newcommand{\ddd}[1]{\delta_{#1}}
\newcommand{\denL}{D_{\hspace{-0.1em}L}}
\newcommand{\denR}{D_{\hspace{-0.1em}R}}
\newcommand{\equal}\equiv
\newcommand{\ff}[1]{\phi_{\hspace{-0.5pt}\raise-1pt\hbox{$\scriptstyle #1$}}}
\newcommand{\fl}[1]{\Phi_{\hspace{-1pt}\raise-0.5pt\hbox{$\scriptstyle #1$}}}
\newcommand{\gcdL}{\wedge_{\scriptscriptstyle L}}
\renewcommand{\ge}{\geqslant}
\newcommand{\ie}{\emph{i.e.}}
\newcommand{\inv}{^{\minus\hspace{-0.1em}1}}
\newcommand{\lcmL}{\vee_{\hspace{-0.2em}\raise1pt\hbox{$\scriptscriptstyle L$}}}
\newcommand{\lcmR}{\vee_{\hspace{-0.2em}\raise1pt\hbox{$\scriptscriptstyle R$}}}
\newcommand{\Ldots}{...\,} 
\renewcommand{\le}{\leqslant}
\newcommand{\len}[1]{\vert#1\vert}
\newcommand{\lens}[1]{{\parallel}#1{\parallel}_{\sigma}}
\newcommand{\lena}[1]{{\parallel}#1{\parallel}_A}
\newcommand{\minus}{\mathchoice{-}{-}{\raise0.7pt\hbox{$\scriptscriptstyle-$}\scriptstyle}{-}}
\newcommand{\newintegeri}[2]{
	\expandafter\def\csname #1\endcsname{#2}
	\expandafter\def\csname #1o\endcsname{{#2\minus1}}
	\expandafter\def\csname #1t\endcsname{{#2\minus2}}
	\expandafter\def\csname #1p\endcsname{{#2\plus1}}
	\expandafter\def\csname #1pp\endcsname{{#2\plus2}}}
\newcommand{\newintegerii}[1]{\newintegeri{#1#1}{#1}}
\newcommand{\numL}{N_{\hspace{-0.1em}L}}
\newcommand{\numR}{N_{\hspace{-0.1em}R}}
\newcommand{\plus}{\mathchoice{+}{+}{\raise0.7pt\hbox{$\scriptscriptstyle+$}\scriptstyle}{+}}
\newcommand{\plusminus}{\mathchoice{\pm}{\pm}{\raise0.7pt\hbox{$\scriptscriptstyle\pm$}\scriptstyle}{\pm}}
\newcommand{\plusexp}{^{\raise0.8pt\hbox{$\hspace{-0.05em}\scriptscriptstyle+$}}}
\newcommand{\revR}[1]{{\curvearrowright}^{\scriptscriptstyle(#1)}_{\scriptscriptstyle R}}
\newcommand{\revvL}{\curvearrowright_{\scriptscriptstyle L}}
\newcommand{\revvR}{\curvearrowright_{\scriptscriptstyle R}}
\newcommand{\sig}[1]{\sigma_{\!#1}} 
\newcommand{\siginv}[1]{\sigma_{\!#1}^{\hspace{-0.05em}\raise0.8pt\hbox{$\scriptscriptstyle-$}\hspace{-0.1em}1}}
\newcommand{\sigpm}[1]{\sigma_{\!#1}^{\hspace{-0.05em}\raise0.8pt\hbox{$\scriptscriptstyle\pm$}\hspace{-0.1em}1}}
\newcommand{\sigg}{\sigma}
\newcommand{\uu}{u}
\newcommand{\uut}{\overline{\uu}}
\newcommand{\uuu}{\uu'}
\newcommand{\uuuu}{\uu''}
\newcommand{\vv}{v}
\newcommand{\vvt}{\overline{\vv}}
\newcommand{\vvv}{v'}
\newcommand{\ww}{w}
\newcommand{\wwt}{\overline{w}}
\newcommand{\www}{\ww'}
\newcommand{\wwwt}{\overline{w'}}
\newcommand{\wwww}{\ww''}
\newcommand{\xx}{x}
\newcommand{\xxt}{\overline{x}}
\newcommand{\yy}{y}
\newcommand{\yyt}{\overline{y}}
\title{A simple algorithm for finding short sigma-definite representatives}
\author{Jean Fromentin}
\author{Luis Paris}
\thanks{Both authors are partially supported by the \emph{Agence Nationale de la Recherche} (\emph{projet Th\'eorie de Garside}, ANR-08-BLAN-0269-03).}
\date{14 d\'ecembre 2010}
\keywords{Braid group, braid monoids, braid ordering, algorithm}
\subjclass[2010]{20F36,20M05,06F05}
\theoremstyle{definition}
\newtheorem{algo}{Algorithm}
\newtheorem{defi}{Definition}[section]
\newtheorem{nota}[defi]{Notation}
\newtheorem{exam}[defi]{Example}
\theoremstyle{plain}
\newtheorem{lemm}[defi]{Lemma}
\newtheorem{prop}[defi]{Proposition}
\newtheorem{coro}[defi]{Corollary}
\renewcommand{\hh}{\noindent\hspace{1em}}
\newcommand{\hhh}{\noindent\hspace{2em}}
\begin{document}

\maketitle

\begin{abstract}
We describe a new algorithm which for each braid returns a quasi-geodesic $\sigma$-definite word representative, defined as a braid word in which the generator $\sigma_i$ with maximal index~$i$ appears either only positively or only negatively.

\end{abstract}

\section*{Introduction}
Since \cite{Dehornoy1994}, we know that Artin's braid groups $\BB\nn$ are left orderable, by an ordering that enjoys many remarkable properties.
This braid ordering is based on the property that every nontrivial braid admits a $\sigg$-definite representative, defined to be a braid word in standard Artin generators~$\sig\ii$ in which the generator~$\sig\ii$ with highest index $\ii$ appears either with only positive exponents or with only negative exponents.
In the past two decades, many different proofs of this result have been found, some of them based on algebra \cite{Burckel1997,Dehornoy1994,Dehornoy1997,Larue1994}, other on geometry~\cite{Bressaud2008,Dynnikov2007,Fenn1999}.
All these methods turn out to be algorithms. 
But in the best cases, starting with a braid word~$\ww$ of length~$\ll$, they only prove the existence of a $\sigg$-definite word $\www$ equivalent to~$\ww$ with length bounded by an exponential on $\ll$.
In \cite{Fromentin2008a}, an algorithm returning a quasi-geodesic $\sigg$-definite representative has been introduced.
It is heavily based on technical properties of the so-called rotating normal form on the Birman--Ko--Lee monoid.
Quite effective, this algorithm remains complicate.

The aim of this paper is to describe a simple algorithm returning a quasi-geodesic $\sigg$-definite representative.
It is based either on the alternating normal form introduced in~\cite{Dehornoy2008} or on the rotating normal form intoduced in \cite{Fromentin2008a,Fromentin2008,Fromentin2008b}.
The main advantage of this new algorithm is that it can be describe with few technical results on these normal forms. Part of the algorithm presented here uses some ideas from \cite{Fromentin2008a}. However, this new algorithm goes beyond the simplification of the previous one, and the paper can be read independently from~\cite{Fromentin2008a}.

The paper is organized as follows.
In Section~\ref{S:Rever}, we give an overview on reversing processes and give some elementary algorithm that will be needed to describe the main algorithm.
In Section~\ref{S:Splitting} we recall the definition of the $\fl\nn$-splitting, that is a natural way to describe each braid of~$\BP\nn$ from a finite sequence of braid of $\BP\nno$, and we give an algorithm to compute it.
In Section~\ref{S:Garside}, we introduce two different algorithms that allow us to express a braid of $\BB\nn$ as a quotient of braids lying in~$\BP\nn$.
In Section~\ref{S:Algo} we describe and prove the correctness of the main algorithm in the context of the alternating normal form.
Finally, in the last section, we investigate the complexity of our algorithm in the context of the Birman--Ko--Lee monoid~$\BKL\nn$.

\section{Reversing process}

\label{S:Rever}

In this section, we recall how to perform elementary computations in a finitely generated Garside monoid.
The main tool is the reversing algorithm introduced in~\cite{Dehornoy1999}.

Assume that $M$ is a Garside monoid.
Then, we define two partial orderings on $M$.
Given elements $\br$ and $\brr$ of $M$, we say that $\br$ \emph{left divides} (resp. \emph{right divides})~$\brr$, denoted by $\br\prec\brr$ (resp. $\br\succ\brr$), if there exists $\brbr$ in $M$ such that~$\br\,\brbr=\brr$ (resp.~$\br=\brbr\,\brr$) is satisfy. 

The \emph{left lcm} of two elements $\br$ and $\brr$ of $M$ is the minimal element~$\brbr$ in $M$, with respect to~$\prec$, satisfying $\br\prec\brbr$ and $\brr\prec\brbr$, and we denote it by~$\br\lcmL\brr$.
Of course, we define symmetrically the \emph{right lcm} of $\br$ and~$\brr$ in~$M$, which is denoted by~$\br\lcmR\brr$.

\begin{defi}
 Let $M$ be a monoid generated by a finite set $S$.

$(i)$ A word on the alphabet $S$ is called a \emph{positive $S$-word},

$(ii)$ A word on the alphabet $S\cup S\inv$ is called a \emph{$S$-word},

$(iii)$ The element represented by an $S$-word $\ww$ is denoted by $\wwt$,

$(iv)$ For $\ww,\www$ two $S$-words, we say that $\ww$ is equivalent to $\www$, denoted by $\ww\equiv\www$, if $\wwt=\wwwt$ holds.

\end{defi}

Let $M$ be a Garside monoid generated by a finite set $S$.
A \emph{left lcm selector} on~$S$ in $M$ is a mapping~$\compL:S\times S\to S^\ast$ such that, for all $\xx,\yy$ in $S$, the words~$\xx\,\compL(\xx,\yy)$ and $\yy\,\compL(\yy,\xx)$ both represent $\xxt\lcmL\yyt$.
We define also a \emph{right lcm selector} on $S$ in~$M$ to be a mapping $\compR$ such that $\compR(\xx,\yy)\,\yy$ and $\compR(\yy,\xx)\,\xx$ represent $\xxt\lcmR\yyt$ for all $\xx,\yy$ in $S$.

\begin{exam}
We recall that the positive braid monoid~$\BP\nn$ is defined for $\nn\ge2$ by the presentation
\begin{equation}
\label{E:BP:Presentation}
\left<\sig1,\Ldots,\sig\nno;\begin{array}{cl} \sig\ii\sig\jj\,=\,\sig\jj\sig\ii & \text{for $|\ii\minus\jj|\ge2$}\\ \sig\ii\sig\jj\sig\ii\,=\,\sig\jj\sig\ii\sig\jj & \text{for $|\ii\minus\jj|=1$}  \end{array}\right>^+.
\end{equation}
We put $\SBP\nn=\{\sig1,...,\sig\nno\}$. Then the applications $\compL$ and $\compR$ defined  on $\SBP\nn\times\SBP\nn$ by 
\[
\compL(\sig\ii,\sig\jj)=\compR(\sig\ii,\sig\jj)=\begin{cases}
                         \sig\jj&\text{for $\abs{i-j}\ge2$,}\\
			 \sig\jj\sig\ii&\text{for $\abs{i-j}=1$.}
                        \end{cases}
\]
are respectively left and right lcm selectors on $\SBP\nn$ in $\BP\nn$.
\end{exam}

For the rest of this section, we fix a Garside monoid~$M$, a finite generating set~$S$ of $M$, a left lcm selector $\compL$ and a right lcm selector~$\compR$ on $S$ in $M$.

\begin{defi}
Let~$\ww,\www$ be $S$-words. 
We say that $\ww\revR1\www$ is true if $\www$ is obtained from~$\ww$ by replacing a subword $\xx\inv\yy$ of $\ww$ by $\compL(\xx,\yy)\,\compL(\yy,\xx)\inv$.
We say that~$\ww\revvR\www$ is true if there exists a sequence $\ww=\ww_0,\Ldots,\ww_\kk=\www$ of $S$-words such that~$\ww_\ii\revR1\ww_\iip$ holds for all $\ii=0,1,\Ldots,\kk{-}1$.

Symmetrically, we say that $\ww\revvL\www$ is true, if $\www$ is obtained form $\ww$ by repeatedly replacing a subword $\xx\,\yy\inv$ of $\ww$ by the word $\compR(\yy,\xx)\inv\,\compR(\xx,\yy)$. 
\end{defi}

We now introduce the notion of  right reversing diagrams.
Assume that $\ww_0,\Ldots,\ww_\kk$ is a reversing sequence, \ie, a sequence of $S$-words such that $\ww_\ii\revR1\ww_\iip$ holds for each~$\ii$.
First we associate with~$\ww_0$ a path labelled with its successive letters: we associate to a positive letter~$\xx$ a horizontal right-oriented arrow labelled~$\xx$, and to a negative letter~$\xx\inv$ a vertical down-oriented arrow labelled~$\xx$.
Then we successively represent the $S$-words~$\ww_1,\Ldots,\ww_\kk$ as follows: if $\ww_\iip$ is obtained form $\ww_\ii$ by replacing the subword~$\xx\inv\yy$ of~$\ww_\ii$ by $\compL(\xx,\yy)\,\compL(\yy,\xx)\inv$, then we complete the pattern corresponding to~$\xx\inv\yy$ using a right-oriented arrow labelled~$\compL(\xx,\yy)$ and a down-oriented arrow labelled~$\compL(\yy,\xx)$ to obtain a square:

\begin{center}
\begin{picture}(82,22)
\put(2,2){\includegraphics[scale=0.7]{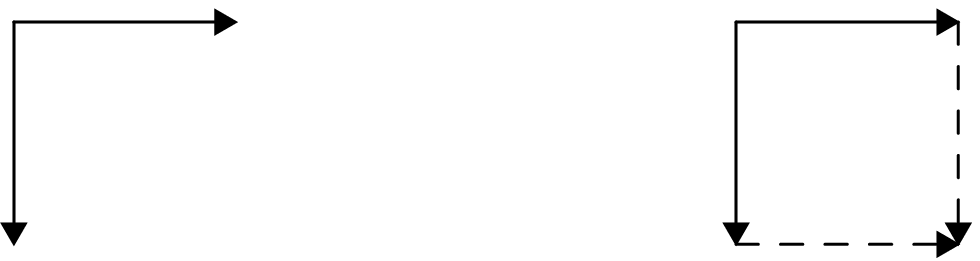}}
\footnotesize
\put(25,11){completed into}
\put(10,20){$\yy$}
\put(0.5,11){$\xx$}
\put(51.5,11){$\xx$}
\put(61,20){$\yy$}
\put(57,0){$\compL(\xx,\yy)$}
\put(71.5,11){$\compL(\yy,\xx)$}
\put(60,11){$\revvR$}
\end{picture}
\end{center}

Symmetrically, we define a left reversing diagram, in which we complete the pattern corresponding to~$\xx\yy\inv$ using a right-oriented arrow labelled~$\compR(\xx,\yy)$ and a down-oriented arrow labelled~$\compR(\yy,\xx)$:

\begin{center}
\begin{picture}(82,22)
\put(2,2){\includegraphics[scale=0.7]{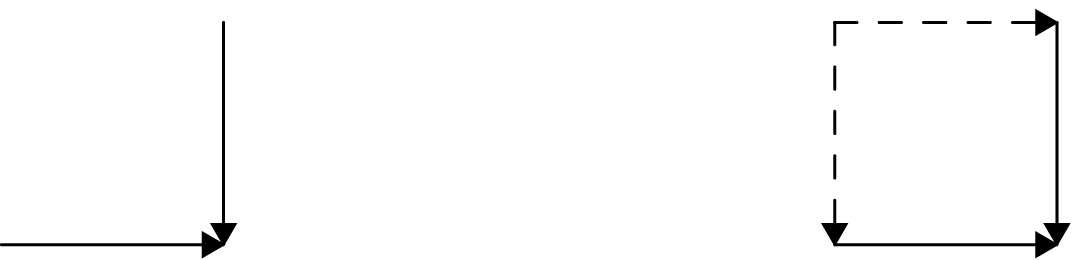}}
\footnotesize
\put(26,11){completed into}
\put(9,0.5){$\xx$}
\put(19,11){$\yy$}
\put(50,11){$\compR(\yy,\xx)$}
\put(64,20.5){$\compR(\xx,\yy)$}
\put(68.5,0.5){$\xx$}
\put(78.5,11){$\yy$}
\put(67.5,11){$\revvL$}
\end{picture}
\end{center}

\begin{prop}\cite{Dehornoy1999}
\label{P:Comp:Rev}
For every $S$-word $\ww$, there exist unique positive $S$-words $\uu$ and~$\vv$ such that $\ww\revvR\uu\,{\vv}\inv$ holds.
Moreover the words $\uu$ and $\vv$ are obtained from $\ww$ in time~$O(pos(\ww)\cdot neg(\ww))$, where $pos(\ww)$ is the number of positive letters occurring in $\ww$ and $neg(\ww)$ is the number of negative letters occurring in $\ww$.
A similar result occurs for $\revvL$.
\end{prop}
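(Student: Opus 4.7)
The plan is to build the right reversing diagram of $\ww$. Encode $\ww$ as a staircase lattice path: each positive letter $\xx$ becomes a right-oriented horizontal arrow labelled $\xx$, and each negative letter $\xx\inv$ becomes a down-oriented vertical arrow labelled $\xx$. A step $\revR1$ then corresponds to locating an inner corner — a down arrow immediately followed by a right arrow, with labels $\xx$ and $\yy$ — and closing it into a square by appending a right edge labelled $\compL(\xx,\yy)$ on top and a down edge labelled $\compL(\yy,\xx)$ on the right. Iterating this corner-completion rewrites $\ww$ through a sequence of $S$-words, and the process is continued until no subword of the form $\xx\inv\yy$ remains; at that moment the current word is automatically of the shape $\uu\,\vv\inv$ with $\uu,\vv$ positive, which gives existence.

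For termination and the complexity estimate, the key observation is that every horizontal (resp. vertical) edge created during the process can be traced back to a unique original positive (resp. negative) letter of $\ww$, so the completed diagram embeds into an abstract grid with $pos(\ww)$ columns and $neg(\ww)$ rows. The number of squares filled — equivalently, the number of single reversing steps — is therefore at most $pos(\ww)\cdot neg(\ww)$. Since $S$ is finite, each $\compL(\xx,\yy)$ is a word of bounded length available in constant time; maintaining a pointer on the next inner corner then yields a total running time of $O(pos(\ww)\cdot neg(\ww))$.

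For uniqueness, I plan to invoke confluence of the rewriting relation $\revR1$. Two inner corners appearing simultaneously in an intermediate word necessarily occupy disjoint positions (the pattern $\xx\inv\yy$ uses one negative and one positive letter), so reversing them in either order produces the same $S$-word; this gives local confluence. Combined with the termination already established, Newman's lemma upgrades it to global confluence, and hence the terminal word $\uu\,\vv\inv$ depends only on $\ww$. The analogous statement for $\revvL$ follows by exchanging $\compL$ with $\compR$ and reflecting the diagram left-right.

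The step I expect to require the most care is the grid-embedding used for the complexity bound: turning the intuitive picture that each intermediate edge has a unique original ancestor into a rigorous invariant preserved along the rewriting needs a careful labelling that persists through successive square-completions. The Garside hypothesis on $M$ enters only implicitly, through the well-definedness of $\compL$ that ensures each square can actually be completed.
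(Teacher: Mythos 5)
The paper offers no proof of this proposition: it is imported verbatim from \cite{Dehornoy1999}, so there is nothing internal to compare your argument against. Judged on its own terms, your architecture is the standard one, and the uniqueness step is sound: two occurrences of the pattern $\xx\inv\yy$ can never overlap (a shared letter would have to be simultaneously positive and negative), so the elementary steps commute, local confluence holds, and Newman's lemma applies once termination is known. Existence also reduces to termination, as you say.

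The genuine gap is in the termination/complexity step, and it is not just a matter of ``careful labelling''. When a square is completed, its two new edges are labelled by the \emph{words} $\compL(\xx,\yy)$ and $\compL(\yy,\xx)$, which in general have length greater than $1$ (in $\BP\nn$ one has $\compL(\sig1,\sig2)=\sig2\sig1$). Later steps act on the individual letters of these words, so each cell of the $pos(\ww)\times neg(\ww)$ coarse grid gets subdivided and is filled by \emph{several} elementary steps. Your claim that the number of $\revR1$-steps is at most $pos(\ww)\cdot neg(\ww)$ is therefore false as stated: right reversing $\siginv1\siginv1\sig2$ in $\BP3$ takes three elementary steps (ending at $\sig2\sig1\,(\sig1\sig2\sig2)\inv$) while $pos\cdot neg=2$. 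More seriously, termination does not follow from the grid picture at all: for a general complemented presentation the recursive subdivision of cells can continue forever, and word reversing is known not to terminate for some presentations. What rescues the Garside case is a substantive fact you never invoke: every edge label arising on the coarse grid represents a simple element (a divisor of the Garside element $\Delta$), because the simples are closed under lcm, complement and divisor; hence all labels have length bounded by a constant depending only on $M$, each coarse cell is completed in $O(1)$ elementary steps, and one obtains both termination and the $O(pos(\ww)\cdot neg(\ww))$ time bound with a monoid-dependent constant. This is exactly where the Garside hypothesis must enter essentially --- not, as you write, ``only implicitly through the well-definedness of $\compL$''.
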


Let $\ww$ be a $S$-word.
As there exist unique positive $S$-words $\uu,\vv$ such that $\ww\revvR\uu\vv\inv$ holds, we say that $\uu$ is the \emph{right numerator} of~$\ww$, denoted by~$\numR(\ww)$, and that~$\vv$ is the \emph{right denominator} of~$\ww$, denoted by~$\denR(\ww)$.
Symmetrically, we define \emph{left numerator} and \emph{left denominator} of $\ww$ respectively denoted by~$\numL(\ww)$ and~$\denL(\ww)$. An immediate consequence of~\cite{Dehornoy1999}~is:

\begin{prop}
For all positive $S$-words $u,\vv$:

$(i)$ $\uut\prec\vvt$ holds if and only if $\denR(\uu\inv\vv)$ is the empty word $\varepsilon$,

$(ii)$ $\uut\succ\vvt$ holds if and only if $\denL(\uu\vv\inv)$ is the empty word $\varepsilon$.
\end{prop}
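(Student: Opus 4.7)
The plan is to combine Proposition~\ref{P:Comp:Rev} with the invariant that each right-reversing step preserves the element represented in the enveloping group. Indeed, since $\compL$ is a left lcm selector, one has $\xxt\cdot\overline{\compL(\xx,\yy)} = \yyt\cdot\overline{\compL(\yy,\xx)}$ in $M$, so the substitution $\xx\inv\yy \to \compL(\xx,\yy)\,\compL(\yy,\xx)\inv$ performed at each step is an equality in the group of fractions. Iterating along the reversing sequence $\uu\inv\vv \revvR \numR(\uu\inv\vv)\,\denR(\uu\inv\vv)\inv$ therefore yields the identity
\[
\uut\cdot\overline{\numR(\uu\inv\vv)} \;=\; \vvt\cdot\overline{\denR(\uu\inv\vv)}
\]
in $M$. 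A direct induction on the number of substitutions performed, using the lcm selector property at each step, shows moreover that this common value is precisely the left lcm $\uut\lcmL\vvt$; this is the classical interpretation of the reversing algorithm in a Garside monoid established in~\cite{Dehornoy1999}.

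With the lcm identity in hand, part~$(i)$ is immediate. For the direction $(\Leftarrow)$, if $\denR(\uu\inv\vv)$ is the empty word, the displayed identity becomes $\uut\cdot\overline{\numR(\uu\inv\vv)} = \vvt$, so $\uut\prec\vvt$ by the very definition of $\prec$. For $(\Rightarrow)$, if $\uut\prec\vvt$ then $\uut\lcmL\vvt = \vvt$, and hence $\vvt\cdot\overline{\denR(\uu\inv\vv)} = \vvt$; left cancellativity in $M$ forces $\overline{\denR(\uu\inv\vv)} = 1$, and since a Garside monoid has no non-trivial invertible element while $\denR(\uu\inv\vv)$ is a positive $S$-word, that word must be empty.

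Part~$(ii)$ is handled by the fully symmetric argument applied to left reversing. The sequence $\uu\vv\inv \revvL \denL(\uu\vv\inv)\inv\,\numL(\uu\vv\inv)$ analogously yields $\overline{\denL(\uu\vv\inv)}\cdot\uut = \overline{\numL(\uu\vv\inv)}\cdot\vvt = \uut\lcmR\vvt$. The same two-directional argument then applies: vanishing of $\denL(\uu\vv\inv)$ gives $\uut\succ\vvt$ at once, while conversely $\uut\succ\vvt$ forces $\uut\lcmR\vvt = \uut$, after which right cancellativity together with the absence of non-trivial units in $M$ conclude. The only non-routine ingredient in the whole proof is the lcm interpretation of reversing, which is not literally contained in Proposition~\ref{P:Comp:Rev} above but is its standard companion in~\cite{Dehornoy1999}.
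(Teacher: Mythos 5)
Your proof is correct, and it follows exactly the route the paper itself relies on: the paper offers no argument beyond declaring the statement ``an immediate consequence of'' Dehornoy's reversing paper, and the standard content behind that citation is precisely the lcm interpretation of reversing that you invoke, namely $\uut\cdot\overline{\numR(\uu\inv\vv)}=\vvt\cdot\overline{\denR(\uu\inv\vv)}=\uut\lcmL\vvt$ (and its mirror for left reversing), combined with cancellativity and the absence of nontrivial invertible elements in a Garside monoid. Nothing further is needed.
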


Let $\br,\brr$ be two elements of~$M$.
The \emph{left gcd} of $\br$ and $\brr$ is the maximal element~$\brbr$ with respect to $\prec$ such that $\brbr\prec\br$ and $\brbr\prec\brr$ holds.

\begin{prop}\cite[Proposition 7.7]{Dehornoy1999}
Let $\uu,\vv$ be positive $S$-words.
Then the left gcd of~$\uut$ and $\vvt$ is the element represented by
\begin{equation}
\label{E:WordLGcd}
\numL(\uu\denL(\numR(\uu\inv\vv)\denR(\uu\inv\vv)\inv)\inv).
\end{equation}
\end{prop}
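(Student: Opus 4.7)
The plan is to trace the formula step by step, identify each intermediate expression with an element of $M$, and conclude by applying the preceding proposition. Set $\uu' = \numR(\uu\inv\vv)$, $\vv' = \denR(\uu\inv\vv)$, $a = \denL(\uu'\vv'\inv)$, and $b = \numL(\uu'\vv'\inv)$, so that~(\ref{E:WordLGcd}) is $\numL(\uu\,a\inv)$; write $U, V, U', V', A, B$ for the elements of $M$ they respectively represent. Let $g = U \gcdL V$ and decompose $U = g\,p$, $V = g\,q$ with $p \gcdL q = 1$ (standard Garside gcd decomposition). The goal reduces to showing that $\numL(\uu\,a\inv)$ represents $g$.

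By the standard interpretation of right reversing (cf.~\cite{Dehornoy1999}), $U\,U' = V\,V' = U \lcmL V = g\,(p \lcmL q)$; left-cancelling $g$ yields $p\,U' = q\,V' = p \lcmL q$, which is in particular a common right-multiple of $U'$ and $V'$. Symmetrically, the left-reversing analogue of Proposition~\ref{P:Comp:Rev} gives $A\,U' = B\,V' = U' \lcmR V'$. Since the right lcm right-divides every common right-multiple, there exists $s \in M$ with $p = s\,A$ and $q = s\,B$. Then $s$ is a common left-divisor of $p$ and $q$, forcing $s = 1$; hence $A = p$ and $B = q$.

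Finally, since $A = p$ right-divides $U = g\,p$, part~(ii) of the preceding proposition, applied to the $S$-word $\uu\,a\inv$, gives $\denL(\uu\,a\inv) = \varepsilon$. Consequently $\uu\,a\inv \equiv \numL(\uu\,a\inv)$ and, evaluated in the group of fractions, this common element is $U\,A\inv = g\,p\,p\inv = g$, as claimed. I expect the most delicate point to be the identification $A = p$; it couples the description of $U \lcmL V$ afforded by right reversing with that of $U' \lcmR V'$ afforded by left reversing, linked through the single equation $p\,U' = q\,V'$ and the coprimality $p \gcdL q = 1$.
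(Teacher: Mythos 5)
Your proof is correct. The paper gives no argument of its own here --- the statement is quoted from \cite[Proposition 7.7]{Dehornoy1999} and is only illustrated by the reversing diagram of Figure~1 --- and your argument is a faithful reconstruction of what lies behind that diagram, following its three stages exactly. The key step, identifying $A=p$ and $B=q$ by comparing the common multiple $pU'=qV'$ (obtained by cancelling $g$ in $UU'=VV'$) with the right lcm $AU'=BV'$ and then killing the cofactor $s$ via $p\gcdL q=1$, is exactly the right mechanism, and the cancellations you use are legitimate since Garside monoids are cancellative. The only facts you take on faith are the completeness properties of reversing --- that right reversing of $\uu\inv\vv$ computes $\uut\lcmL\vvt$ and that left reversing computes right lcms --- which go slightly beyond the literal statement of the termination/uniqueness proposition in the paper but are precisely what the cited reference establishes, so invoking them is appropriate. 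One cosmetic remark: the intermediate identity $U\lcmL V=g\,(p\lcmL q)$ is true but is never actually needed; only the equality $pU'=qV'$ enters the rest of the argument.
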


See Figure~\ref{F:Gcd} for a description of \eqref{E:WordLGcd} in terms of reversing diagrams.

\begin{figure}[htbf!]
\label{F:Gcd}
\begin{picture}(70,38)
\put(12,2){\includegraphics[scale=0.7]{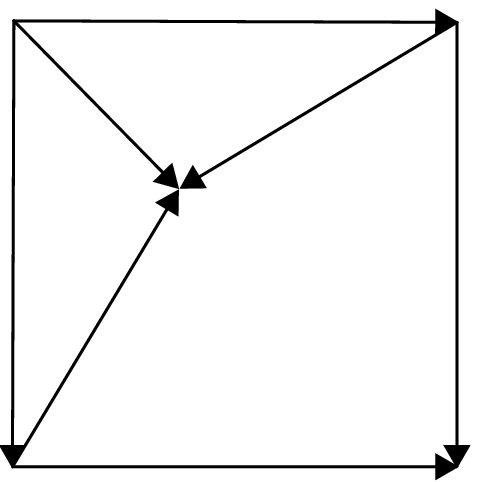}}
\footnotesize
\put(27,36){$\vv$}
\put(19,29.5){$\numL{(\uu{\uuuu}\inv)}$}
\put(16,20){\normalsize $\revvL$}
\put(29,22.5){$\numL(\uuu{\vvv}\inv)$}
\put(10,17){$\uu$}
\put(46,17){$\vvv{=}\denR(\uu\inv\vv)$}
\put(29,17){\normalsize $\revvL$}
\put(18,9){$\uuuu{=}\denL(\uuu{\vvv}\inv)$}
\put(19,0){$\uuu{=}\numR(\uu\inv\vv)$}
\end{picture}
\caption{\sf \smaller Reversing diagram corresponding to the computation of the left gcd of~$\uut$ and~$\vvt$. Firstly, we right reverse~$\uu\inv\vv$ to obtain~$\uuu{\vvv}\inv$. Secondly we left reverse~$\uuu{\vvv}\inv$ to compute~$\denL(\uuu{\vvv}\inv)$, denoted by~$\uuuu$.
Finally we left reverse~$\uu{\uuuu}\inv$ to compute~$\numL{(\uu{\uuuu}\inv)}$, which represents $\uut\gcdL\vvt$. }
\end{figure}

The reversing process takes a word on input and returns a word.
In order to simplify notations we shall use divisor and gcd symbols on words.

\begin{nota}
Let $\uu,\vv$ be positive $S$-words.

$(i)$ If $\uut\prec\vvt$ holds, we denote by $\uu\backslash\vv$ the word $\numR(\uu\inv\,\vv)$.

$(ii)$ If $\uut\succ\vvt$ holds, we denote by $\uu/\vv$ the word $\numL(\uu\,\vv\inv)$.

$(iii)$ We denote by $\uu\gcdL\vv$ the word of \eqref{E:WordLGcd}.
\end{nota}

With these notations, the element $\overline{\uu\backslash\vv}$ is equal to $\uut\inv\vvt$, the element $\overline{\uu/\vv}$ is equal to~$\uut\,\vvt\inv$ and the element $\overline{\uu\gcdL\vv}$ is the left gcd of $\uut$ and $\vvt$.

In the sequel we will consider two Garside monoids naturally generated by a finite set, namely the positive braid monoid~$\BP\nn$ generated by $\SBP\nn$ and the dual braid monoid $\BKL\nn$ generated by~$\SBKL\nn$ (see Section~\ref{S:Dual}).
From now on, we will not specify the lcm selectors for left and right reversing operations in these monoids, if not needed.

\section{The $\fl\nn$-splitting}
\label{S:Splitting}
It is shown in \cite{Dehornoy2008} how associate with every braid $\br$ of $\BP\nn$ a unique sequence of braids in $\BP\nno$, called the \emph{$\fl\nn$-splitting} of $\br$, that completely determines $\br$.
As mentioned in the introduction, our algorithm is based on this operation.
In this section we recall the definition and the construction of the $\fl\nn$-splitting of a braid.

We recall that the positive braid monoid~$\BP\nn$ is a Garside monoid with Garside element $\Delta_\nn$ defined by
\[
\Delta_\nn=(\sig1\,\Ldots\,\sig\nno)\cdot(\sig1\,\Ldots\,\sig\nnt)\cdot\Ldots\cdot(\sig1\,\sig2)\cdot\sig1.
\]
See \cite{Dehornoy1999,Garside1969} for a definition of a Garside monoid.

We denote by $\fl\nn$ the \emph{flip automorphism} of $\BP\nn$, \ie, the application defined on~$\BP\nn$ by $\fl\nn(\br)=\Delta_\nn\,\br\,\Delta_\nn\inv\,$.
The initial observation of the construction of the alternating normal form is that each braid of $\BP\nn$ admits a unique maximal right divisor lying in $\fl\nn^\kk(\BP\nno)$ for all $\kk$.

\begin{lemm}
\label{L:Tail}
 For $\nn\ge3$ and $\kk\ge0$, every braid~$\br$ of~$\BP\nn$ admits a unique maximal right divisor~$\br_1$ lying in~$\fl\nn^\kk(\BP\nno)$.
\end{lemm}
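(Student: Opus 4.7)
Since $\Delta_\nn^2$ is central in $\BP\nn$, the flip automorphism satisfies $\fl\nn^2=\mathrm{id}$, so $\fl\nn^\kk(\BP\nno)$ is either $\BP\nno=\langle\sig1,\ldots,\sig\nnt\rangle^+$ or $\fl\nn(\BP\nno)=\langle\sig2,\ldots,\sig\nno\rangle^+$, depending on the parity of $\kk$. In either case, $\fl\nn^\kk(\BP\nno)$ is the submonoid of $\BP\nn$ generated by a subset $T\subseteq\SBP\nn$ consisting of $\nnt$ consecutive Artin generators. I will write $M_T=\fl\nn^\kk(\BP\nno)$ for brevity.

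\textbf{Key closure property.} The heart of the argument is to show that $M_T$ is closed under right lcm computed in $\BP\nn$: if $\uut,\vvt\in M_T$, then $\uut\lcmR\vvt\in M_T$. This follows from inspecting the right lcm selector $\compR$ on $\SBP\nn\times\SBP\nn$ given in the example: for any $\sig\ii,\sig\jj\in T$, the word $\compR(\sig\ii,\sig\jj)$ uses only letters from $T$, since $T$ is closed under the operations $\ii\mapsto\ii$, $\jj\mapsto\jj$ and $\{\ii,\jj\}\mapsto\{\ii,\jj\}$ appearing in the selector. Therefore, when we left-reverse a word of the form $\uu\,\vv\inv$ with $\uu,\vv$ positive $T$-words, every intermediate rewriting step introduces only letters of $T$, and the resulting word $\www\inv\www'$ consists of positive $T$-words. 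By Proposition~\ref{P:Comp:Rev} and its symmetric version, $\overline{\www'}$ represents $\uut\lcmR\vvt$, which thus lies in $M_T$.

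\textbf{Conclusion.} Let
\[
E=\{\brbr\in M_T\mid\br\succ\brbr\}
\]
be the set of right divisors of $\br$ lying in $M_T$. The set $E$ is non-empty, since $1\in E$, and it is finite, since $\br$ has only finitely many right divisors in the Garside monoid $\BP\nn$. Moreover, $E$ is stable under right lcm: for $\brbr_1,\brbr_2\in E$, both right-divide $\br$, so $\brbr_1\lcmR\brbr_2$ also right-divides $\br$ by the Garside lattice structure of $\BP\nn$, and it lies in $M_T$ by the key closure property. Iterating the right lcm over all (finitely many) elements of $E$, we obtain an element $\br_1\in E$ satisfying $\brbr\prec\br_1$ (for the divisibility on $M_T$ induced from $\BP\nn$) for every $\brbr\in E$. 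This $\br_1$ is the unique maximal right divisor of $\br$ in $M_T=\fl\nn^\kk(\BP\nno)$, as required.

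\textbf{Main obstacle.} The only non-routine point is the closure of the parabolic submonoid $\fl\nn^\kk(\BP\nno)$ under right lcm in $\BP\nn$; once that is established, the existence and uniqueness of $\br_1$ is a formal consequence of the Garside lattice structure and finiteness of the set of right divisors of $\br$.
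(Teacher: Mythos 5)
Your proof is correct and takes essentially the same route as the paper's: the paper conjugates by $\fl\nn^{-\kk}$ to reduce to the submonoid $\BP\nno$ and simply cites its closure under right divisors and lcm, while you identify $\fl\nn^\kk(\BP\nno)$ explicitly via $\fl\nn^2=\mathrm{id}$ and actually verify the closure under right lcm from the lcm selector, then conclude by the same finiteness-plus-lattice argument. (Two cosmetic slips that do not affect the argument: the right lcm $\uut\lcmR\vvt$ is represented by $\www'\,\vv$ rather than by $\www'$ alone, and the maximality of $\br_1$ in $E$ should be expressed with the right-divisibility relation $\br_1\succ\brbr$ rather than $\brbr\prec\br_1$.)
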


\begin{proof}
The braid $\br_1$ is a maximal right divisor of~$\br$ lying in $\fl\nn^\kk(\BP\nno)$ if and only if $\fl\nn^{-\kk}(\br_1)$ is the maximal right divisor of $\fl\nn^{-\kk}(\br)$ lying in $\BP\nno$.
As the submonoid~$\BP\nno$ of~$\BP\nn$ is closed under right divisors and left lcm, the braid  $\fl\nn^{-\kk}(\br_1)$ exists and is unique.
\end{proof}

\begin{defi}
 The braid $\br_1$ of Lemma~\ref{L:Tail} is called the $\fl\nn^\kk(\BP\nno)$-tail of $\br$. 
\end{defi}

By iterating the tail construction, we then associate with every braid of $\BP\nn$ a finite sequence of braids of $\BP\nno$.

\begin{prop}\cite[Proposition 2.5]{Dehornoy2008}\label{P:Splitting}
 Assume $\nn\ge3$. Then for each nontrivial braid $\br$ of $\BP\nn$, there exists a unique sequence $(\br_\brdi,\Ldots,\br_1)$ in $\BP\nno$ satisfying 
\begin{gather}
 \br_\brdi\not=1 \quad \textrm{and} \quad \br=\fl\nn^\brdio(\br_\brdi)\cdot...\cdot\fl\nn(\br_2)\cdot\br_1\\
\textrm{for each $\kk\ge1$,  $\fl\nn^\kko(\br_\kk)$ is the $\fl\nn^\kko(\BP\nno)$-tail of $\fl\nn^\brdio(\br_\brdi)\cdot...\cdot\fl\nn^\kko(\br_\kk)$}
\end{gather}

\begin{defi}
\label{D:Splitting}
The sequence $(\br_\brdi,\Ldots,\br_1)$ of Proposition~\ref{P:Splitting} is called the $\fl\nn$-splitting of~$\br$ and its length is called the $\fl\nn$-breadth of $\br$.

\end{defi}
\end{prop}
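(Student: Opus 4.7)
The plan is to build the sequence recursively by iterating the tail operation of Lemma~\ref{L:Tail}, verify termination via the length homomorphism on the Garside monoid $\BP\nn$, and then read off uniqueness from the uniqueness already built into Lemma~\ref{L:Tail}.

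Concretely, starting from $\br^{(0)} := \br$, I inductively define $\br_\kk \in \BP\nno$ to be the unique braid provided by Lemma~\ref{L:Tail} for which $\fl\nn^\kko(\br_\kk)$ is the $\fl\nn^\kko(\BP\nno)$-tail of $\br^{(\kko)}$, and let $\br^{(\kk)}$ be the complementary factor, determined by the equation $\br^{(\kko)} = \br^{(\kk)} \cdot \fl\nn^\kko(\br_\kk)$. Declare $\brdi$ to be the smallest index with $\br^{(\brdi)} = 1$. Telescoping yields $\br = \fl\nn^\brdio(\br_\brdi) \cdots \fl\nn(\br_2) \cdot \br_1$, while the identity $\br^{(\brdio)} = \fl\nn^\brdio(\br_\brdi)$ together with $\br^{(\brdio)} \neq 1$ forces $\br_\brdi \neq 1$; the tail condition demanded by the statement holds at every $\kk$ by construction.

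The main obstacle is proving termination, \ie{} that such a finite $\brdi$ exists. The key observation is that $\fl\nn$ swaps $\sig\ii \leftrightarrow \sig{\nn-\ii}$, so $\fl\nn^2$ is the identity on $\BP\nno$; consequently the submonoids $\fl\nn^\kko(\BP\nno)$ alternate between $\BP\nno = \langle \sig1, \Ldots, \sig\nnt \rangle$ and $\fl\nn(\BP\nno) = \langle \sig2, \Ldots, \sig\nno \rangle$, and their union contains every Artin generator. Since any nontrivial element of $\BP\nn$ admits some $\sig\ii$ as a right divisor, it admits a nontrivial right divisor in at least one of these two submonoids, so $\br_\kk$ and $\br_\kkp$ cannot both be trivial while $\br^{(\kko)} \neq 1$. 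Combined with the length identity $\len{\br^{(\kk)}} = \len{\br^{(\kko)}} - \len{\br_\kk}$, this forces the length of $\br^{(\kk)}$ to strictly decrease at least every two steps, yielding termination.

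Uniqueness then follows by an easy induction. Given any candidate sequence $(\brbr_c, \Ldots, \brbr_1)$ satisfying the two displayed conditions, applying the $\kk = 1$ clause and Lemma~\ref{L:Tail} forces $\brbr_1$ to equal the $\BP\nno$-tail of $\br$, hence $\brbr_1 = \br_1$ and the complementary factor equals $\br^{(1)}$. Iterating this identification, each $\brbr_\kk$ must agree with $\br_\kk$, while the condition $\brbr_c \neq 1$ together with the product formula pins $c = \brdi$.
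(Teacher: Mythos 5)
The paper does not actually prove this proposition---it imports it verbatim from \cite{Dehornoy2008}---so there is no in-paper argument to compare against. Your proof is correct and follows the natural route one would expect: iterate the tail operation of Lemma~\ref{L:Tail}, establish termination, and read off uniqueness from the uniqueness of the tail together with cancellativity in $\BP\nn$. The only step requiring a real idea, termination, is handled correctly: since $\fl\nn^2$ is the identity and the two alternating submonoids $\BP\nno=\langle\sig1,\Ldots,\sig\nnt\rangle$ and $\fl\nn(\BP\nno)=\langle\sig2,\Ldots,\sig\nno\rangle$ jointly contain every generator, a nontrivial remainder always produces a nontrivial tail within two consecutive steps, so its length strictly decreases at least every other step. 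The uniqueness paragraph is terse on the case $c<\brdi$ (one should note that the exhausted candidate product being trivial contradicts $\br^{(c)}\neq1$), but this is routine and does not affect correctness.
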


We give now an algorithm to compute the $\fl\nn$-splitting of a braid given by a positive~$\SBP\nn$-word~$\ww$.
More precisely the algorithm returns a sequence $(\ww_\brdi,\Ldots,\ww_1)$ of positive $\SBP\nno$-words such that $(\wwt_\brdi,\Ldots,\wwt_1)$ is the $\fl\nn$-splitting of~$\wwt$.

\begin{algo}\label{A:PhiNSplitting}
Compute the $\fl\nn$-splitting of the braid represented by $\ww$ 

\hh\textbf{Input:} A positive $\SBP\nn$-word $\ww$ with $\nn\ge3$

\hhh \verb|1.| \verb|Put| $\ss=(\,)$, $\www=\ww$ and $\kk=0$.

\hhh \verb|2.| \verb|While| $\www\not=\varepsilon$ \verb|do|

\hhh \verb|3.| \hh \verb|Put| $\uu=\varepsilon$.

\hhh \verb|4.| \hh \verb|While| there exists $\xx\in \SBP\nno$ such that $\wwwt\succ\fl\nn^\kk(\xxt)$ \verb|do| 

\hhh \verb|5.| \hh \hh \verb|Put| $\www=\www/\fl\nn^\kk(\xx)$ and $\uu=\xx\,\uu$

\hhh \verb|6.| \hh Insert $\uu$ on the left of $\ss$.

\hhh \verb|7.| \hh \verb|Put| $\kk=\kk+1$

\hhh \verb|8.| \verb|Return| $s$.
\end{algo}

\begin{prop}
Running on $\ww$, Algorithm~\ref{A:PhiNSplitting} ends in time $O(\len{\ww}^2)$ and returns a sequence $(\ww_\brdi,\Ldots,\ww_1)$ of positive $\SBP\nno$-words such that  $(\wwt_\brdi,\Ldots,\wwt_1)$ is the $\fl\nn$-splitting of $\wwt$.
\end{prop}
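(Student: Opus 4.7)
The plan is to prove correctness and then the time bound $O(|\ww|^2)$ separately, both resting on a single key observation about greedy tail extraction.

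For correctness, I would maintain the following invariant on the outer \verb|while| loop: at the start of the iteration with counter $\kk$, the word $\www$ represents $\fl\nn^\brdio(\br_\brdi)\cdots\fl\nn^\kk(\br_{\kk\plus1})$ and $\ss$ stores $(\br_\kk,\Ldots,\br_1)$, where $(\br_\brdi,\Ldots,\br_1)$ is the $\fl\nn$-splitting of $\wwt$ given by Proposition~\ref{P:Splitting}. Combined with the guarantee of Proposition~\ref{P:Splitting} that the $\fl\nn$-splitting is a finite sequence, this invariant immediately yields that the loop terminates after $\brdi$ iterations with $\www=\varepsilon$ and $\ss=(\br_\brdi,\Ldots,\br_1)$. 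Preservation of the invariant reduces to the correctness of the inner loop, which I argue next.

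The heart of the proof is showing that the inner \verb|while| loop at step $\kk$ produces a positive $\SBP\nno$-word $\uu$ with $\uut=\br_{\kk\plus1}$. Let $\alpha=\fl\nn^{-\kk}(\wwwt)$ at the entry to the inner loop, so that $\br_{\kk\plus1}$ is the maximal right divisor $\tau$ of $\alpha$ in $\BP\nno$. The key observation is: if $\xx\in\SBP\nno$ right divides $\alpha$, then $\xx$ right divides $\tau$ and $\tau/\xx$ is the maximal right divisor of $\alpha/\xx$ in $\BP\nno$. This follows because $\BP\nno$ is closed under right divisors and left lcm (the same closure used in Lemma~\ref{L:Tail}): $\xx$ is a right divisor of $\alpha$ in $\BP\nno$, hence of $\tau$ by maximality; and any $\tau''\in\BP\nno$ right dividing $\alpha/\xx$ yields $\tau''\xx\in\BP\nno$ right dividing $\alpha$, so $\tau''\xx$ right divides $\tau$, whence $\tau''$ right divides $\tau/\xx$ by right cancellation. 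Iterating this, each extraction $\www\gets\www/\fl\nn^\kk(\xx)$ together with $\uu\gets\xx\,\uu$ replaces $\tau$ by $\tau/\xx$, and the inner loop exits exactly when $\tau/\uut=1$, i.e. when $\uut=\br_{\kk\plus1}$. The order in which letters are peeled off is irrelevant, and a quick bookkeeping check (the word $\uu$ is built by prepending) confirms that $\fl\nn^\kk(\uut)$ is the product of the successively extracted right factors in the correct order.

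For complexity, note that $\fl\nn$ is the automorphism $\sig\ii\mapsto\sig{\nn-\ii}$, so every $\fl\nn^\kk(\xx)$ with $\xx\in\SBP\nno$ is again a single Artin generator. Thus by Proposition~\ref{P:Comp:Rev} each extraction step $\www\gets\www/\fl\nn^\kk(\xx)$ and each right-divisibility test (carried out via the characterization $\denL(\,\cdot\,)=\varepsilon$) costs $O(|\www|)$, and the test loops over only $\nn\minus2$ candidate letters per call. Each successful extraction decreases the exponent sum of the braid represented by $\www$ by exactly one, so the total number of extractions across the whole algorithm is $|\ww|$, and the number of outer iterations is $\brdi\le|\ww|$ (since each nontrivial $\br_\kk$ contributes at least one generator to a positive representative of $\wwt$). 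Since $|\www|\le|\ww|$ is maintained throughout, the cumulative running time is $O(|\ww|\cdot|\ww|)=O(|\ww|^2)$.

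The main obstacle is the inner-loop correctness: establishing that greedy letter-by-letter peeling from the right—without any lookahead or care about which candidate $\xx$ is chosen first—produces exactly the maximal tail. Once the closure of $\BP\nno$ under lcm and right divisors is invoked, this becomes a clean inductive argument, and the remaining verifications (invariant bookkeeping and cost accounting) are routine.
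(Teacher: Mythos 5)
Your proof is correct and follows essentially the same route as the paper: an outer-loop invariant reducing everything to the claim that greedy letter-by-letter right-division computes the $\fl\nn^\kk(\BP\nno)$-tail, plus the reversing-complexity bound from Proposition~\ref{P:Comp:Rev}. In fact you supply more detail than the paper does, which simply asserts that Lines 3--5 compute the maximal right divisor; your closure argument (every right divisor of $\alpha$ in $\BP\nno$ right divides the tail, and peeling a generator passes the maximality property to the quotient) is exactly the justification left implicit there.
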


\begin{proof}
We denote by $\br$ the braid represented by the value of $\www$ at Line \verb|3|. 
Lines~\verb|3|, \verb|4| and \verb|5| compute the maximal right divisor of $\br$ lying in~$\fl\nn^\kk(\BP\nno)$.
At Line \verb|6|, the braid $\fl\nn^\kk(\uut)$ is equal to the $\fl\nn^\kk(\BP\nno)$-tail of $\br$ and $\wwwt$ is equal to $\br/\fl\nn^\kk(\uut)$.

Therefore the algorithm applies successively the $\fl\nn^\kk(\BP\nno)$-tail construction for $\kk=1,2,\Ldots$.
Then, by Proposition~\ref{P:Splitting} it must stop and return the expected sequence of words.

As for time complexity, testing if $\wwwt\succ\fl\nn^\kk(\xxt)$ holds and computing $\www/ \fl\nn^\kk(\xx)$ need to run the left revering process on $\www\,(\fl\nn^\kk(\xx))\inv$.
Proposition \ref{P:Comp:Rev} guarantees that these two operations can be done in time $O(\len{\www})$, and so, in time $O(\len{\ww})$ since~$\len{\www}\le\len{\ww}$ holds.
Then an easy bookkeeping shows that the algorithm ends in time~$O(\len{\ww}^2)$.
\end{proof}

\section{Garside quotient}

\label{S:Garside}

In the previous section we have seen how to compute the $\fl\nn$-splitting of a braid lying in $\BP\nn$.
Of course there is no possible extension of the notion of $\fl\nn$-splitting to the braid group~$\BB\nn$.
However, we have the following.

\begin{prop}
\label{P:GarsideQuotient}
 Each braid $\br$ admits a unique decomposition $\Delta_\nn^{-\tt}\,\brr$ where $\tt$ is a nonnegative integer and $\brr$ is a braid belonging to $\BP\nn$, which is not left divisible by~$\Delta_\nn$, unless $\tt=0$.

\end{prop}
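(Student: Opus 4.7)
The plan is to reduce the statement to two classical properties of the Garside monoid $\BP\nn$: that $\BB\nn$ is the group of fractions of $\BP\nn$, and that every element of $\BP\nn$ right-divides some power of the Garside element $\Delta_\nn$; both follow from the Garside structure recalled in \cite{Dehornoy1999,Garside1969}.

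For existence, I would write an arbitrary braid $\br \in \BB\nn$ as $\alpha\inv\beta$ with $\alpha,\beta\in\BP\nn$, then choose $\kk\ge 0$ and $\brbr\in\BP\nn$ satisfying $\Delta_\nn^\kk = \brbr\,\alpha$; this yields $\alpha\inv = \Delta_\nn^{-\kk}\brbr$, hence the decomposition $\br = \Delta_\nn^{-\kk}\cdot\brbr\,\beta$ of the required shape. To enforce that the positive factor $\brr = \brbr\,\beta$ is not left-divisible by $\Delta_\nn$ whenever the exponent is positive, I would iteratively strip leading $\Delta_\nn$ factors: while $\kk\ge 1$ and $\Delta_\nn\prec\brr$, write $\brr = \Delta_\nn\,\brrr$ and replace the decomposition by $\Delta_\nn^{-(\kk-1)}\brrr$. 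Each step strictly decreases $\kk$, so the procedure terminates with a decomposition meeting all the required constraints.

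For uniqueness, suppose two such decompositions $\Delta_\nn^{-\tt}\brr = \Delta_\nn^{-\ss}\brrr$ with, without loss of generality, $\tt\le\ss$. Since $\BP\nn$ embeds in $\BB\nn$, multiplying in $\BB\nn$ by $\Delta_\nn^{\ss}$ yields the equality $\Delta_\nn^{\ss-\tt}\brr = \brrr$ in $\BP\nn$. If $\ss>\tt$, then $\ss\ge 1$ and this equation shows $\Delta_\nn\prec\brrr$, contradicting the assumption on $\brrr$. Hence $\ss=\tt$, and cancellativity of $\BP\nn$ yields $\brr=\brrr$.

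The main point requiring care is not a delicate calculation but the clean invocation of the two Garside-theoretic facts above; once those are in hand, both parts of the argument are short and direct.
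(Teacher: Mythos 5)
Your proposal is correct and follows essentially the same route as the paper: both rest on $\BB\nn$ being the group of fractions of $\BP\nn$ together with the fact that every positive braid divides a power of $\Delta_\nn$, and both prove uniqueness by comparing exponents and invoking cancellativity. The only cosmetic difference is that the paper takes $\tt$ minimal with $\Delta_\nn^\tt\br\in\BP\nn$ and deduces non-divisibility from minimality, whereas you strip leading $\Delta_\nn$ factors from an initial decomposition; these are equivalent.
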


\begin{proof}
The monoid $\BP\nn$ is a Garside monoid with Garside elements $\Delta_\nn$, see \cite{Garside1969}.
 As $\BB\nn$ is the group of fractions of $\BP\nn$, there exist a smallest nonnegetive integer $\tt$ such that $\Delta_\nn^\tt\,\br$ lies in $\BP\nn$.
  If $\tt$ is positive, the minimality hypothesis on $\tt$ implies $\Delta_\nn\not\prec\Delta_\nn^\tt\,\br$. Then we define $\brr$ to be the braid $\Delta_\nn^\tt\,\br$.

Assume now that $\Delta_\nn^{-\tt'}\,\brrr$ is another decomposition of $\br$ satisfying the hypothesis of the proposition.
As $\Delta_\nn^{\tt'}\br$ belongs to $\BP\nn$, we have $\tt'\ge\tt$.
Assume $\tt'>\tt$.
Then we have $\tt'>0$.
As the braid~$\brrr$ is equal to $\Delta_\nn^{\tt'-\tt}\,\brr$, the relation $\Delta_\nn\prec\brrr$ holds, that is in contradiction with  $\tt'>0$ and the hypothesis of the proposition.
Hence $\tt'$ is equal to $\tt$ and then $\brr$ is equal to $\brrr$.
\end{proof}

\begin{algo}\label{A:GarsideFraction}
Compute the decomposition $\Delta_\nn^{-\tt}\,\vvt$ given in Proposition \ref{P:GarsideQuotient} of the braid represented by the $\SBP\nn$-word $\ww$.

\hh\textbf{Input:} An $\SBP\nn$-word $\ww$

\hhh \verb|1.| Write $\ww$ as $\ww_0\,\xx_1\inv\,\ww_1\,\Ldots\,\ww_\tto\,\xx_\tt\inv\,\ww_\tt$ (where $w_i$ is a positive word and $\xx_\jj$ is a letter).

\hhh \verb|3.| \verb|For| $\ii=1\,\Ldots\,\tt$ compute $\uu_\ii$ such that $\Delta_\nn=\uu_\ii\,\xx_\ii$.

\hhh \verb|4.| \verb|Put| $\vv=\fl\nn^\tt(\ww_0)\,\fl\nn^\tto(\uu_1\,\ww_1)\,\Ldots\,\fl\nn(\uu_\tto\,\ww_\tto)\,\uu_\tt\,\ww_\tt$.

\hhh \verb|5.| \verb|While| $\Delta_\nn\prec\vvt$ and $\tt>0$ hold \verb|do|

\hhh \verb|6.| \hh \verb|Put| $\vv=\Delta_\nn\backslash\vv$ and $\tt=\tt-1$.

\hhh \verb|7.| \verb|Return| $\Delta_\nn^{-\tt}\,\vv$.
\end{algo}

\begin{prop}
\label{P:A:GarsideFraction}
Running on $\ww$, Algorithm~\ref{A:GarsideFraction} ends in time $O(\len\ww^2)$ and has the correct output.
Moreover we have $\len{\Delta_\nn^{-\tt}\uu}\le(\nn^2{-}\nn{-}1)\cdot\lens{\wwt}$, where $\lens{\br}$ is the minimal length of a $\Sigma$-word representing $\br$.
\end{prop}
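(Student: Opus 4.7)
The plan is to verify in turn correctness of the output, the $O(\len\ww^2)$ time bound, and the length inequality.

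For correctness, the relation $\Delta_\nn=\uu_\ii\,\xx_\ii$ from Step~3 yields $\xx_\ii\inv=\Delta_\nn\inv\,\uu_\ii$, and the flip identity $\br\,\Delta_\nn\inv=\Delta_\nn\inv\,\fl\nn(\br)$ lets me migrate every $\Delta_\nn\inv$ appearing in $\ww_0\,\Delta_\nn\inv\,\uu_1\,\ww_1\,\Ldots\,\Delta_\nn\inv\,\uu_\tt\,\ww_\tt$ to the far left; a straightforward induction on $\tt$ then shows the result coincides with the word $\Delta_\nn^{-\tt}\,\vv$ constructed at Step~4. The while loop is valid because the condition $\Delta_\nn\prec\vvt$ makes $\Delta_\nn\backslash\vv$ represent $\Delta_\nn\inv\,\vvt$, so each pass preserves the braid $\Delta_\nn^{-\tt}\,\vvt$. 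The exit condition matches exactly the hypotheses of Proposition~\ref{P:GarsideQuotient}, which pins down the canonical decomposition uniquely.

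For the time bound, Step~1 is $O(\len\ww)$. The $\uu_\ii$ have fixed length $\len{\Delta_\nn}-1$, so the word $\vv$ assembled at Step~4 has length at most $\len\ww+\tt(\len{\Delta_\nn}-1)=O(\len\ww)$ once $\nn$ is regarded as a constant. Each iteration of the while loop amounts to right-reversing $\Delta_\nn\inv\,\vv$, which by Proposition~\ref{P:Comp:Rev} costs $O(\len{\Delta_\nn}\cdot\len\vv)=O(\len\ww)$; since there are at most $\tt\le\len\ww$ iterations, the total cost is $O(\len\ww^2)$.

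The length inequality requires a short detour. Fix a minimal $\Sigma$-word $\ww^\star$ representing $\wwt$, with $\ell^\star_+$ positive and $\ell^\star_-$ negative letters, so $\ell^\star_++\ell^\star_- = \lens{\wwt}$. A hypothetical application of Steps~1--4 to $\ww^\star$ would produce a positive word $\vv^\star$ of length at most $\ell^\star_+ + \ell^\star_-(\len{\Delta_\nn}-1)$ with $\wwt = \Delta_\nn^{-\ell^\star_-}\,\overline{\vv^\star}$. By the uniqueness clause of Proposition~\ref{P:GarsideQuotient}, the pair $(\tt,\vv)$ returned on the real input~$\ww$ must satisfy $\overline{\vv^\star} = \Delta_\nn^{\ell^\star_- - \tt}\,\vvt$. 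Since $\BP\nn$ is homogeneous, the length of a positive word is a braid invariant, so $\len{\vv^\star} = (\ell^\star_- - \tt)\len{\Delta_\nn} + \len\vv$, forcing $\tt \le \ell^\star_-$ and $\len\vv \le \len{\vv^\star}$. Combining these,
\[
\len{\Delta_\nn^{-\tt}\,\vv} = \tt\,\len{\Delta_\nn} + \len\vv \le \ell^\star_-\,\len{\Delta_\nn} + \ell^\star_+ + \ell^\star_-(\len{\Delta_\nn}-1) = \ell^\star_+ + \ell^\star_-\,(\nn^2-\nn-1),
\]
and the right-hand side is bounded by $(\nn^2-\nn-1)\,\lens{\wwt}$ since $\nn^2-\nn-1\ge1$ for $\nn\ge2$. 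The main subtlety lies here: one never actually runs the algorithm on the minimal $\ww^\star$, and Proposition~\ref{P:GarsideQuotient} is what allows length information from that hypothetical execution to be transported back to the real one on~$\ww$.
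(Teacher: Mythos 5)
Your proof is correct and follows essentially the same route as the paper: substitute $\xx_\ii\inv=\Delta_\nn\inv\uu_\ii$ and push the $\Delta_\nn\inv$'s left via the flip identity for correctness, bound the reversing steps for the time estimate, and transport the length bound from a geodesic representative using the uniqueness in Proposition~\ref{P:GarsideQuotient} together with homogeneity. The only small imprecision is that the inequality $\tt\le\ell^\star_-$ is not literally forced by the length equation alone; it follows from the minimality of $\tt$ in Proposition~\ref{P:GarsideQuotient} (since $\Delta_\nn^{\ell^\star_-}\wwt=\overline{\vv^\star}$ lies in $\BP\nn$), after which your computation goes through.
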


\begin{proof}
For  $\ii=1,\Ldots,\tt$, we denote by $\xx_\ii\inv$ the negative letters occurring in $\ww$.
Then we replace each $\xx_\ii\inv$ by $\Delta_\nn\inv\,\uu_\ii$ to obtain 
\begin{equation}
\label{E:P:A:GarsideFraction}
\ww\equiv\ww_0\,\Delta_\nn\inv\,\uu_1\,\ww_1\,\Ldots\,\ww_\tto\,\Delta_\nn\inv\,\uu_\tt\,\ww_\tt.
\end{equation}
The definition of $\fl\nn$ implies $\uu\,\Delta_\nn\inv\equiv\Delta_\nn\inv\,\fl\nn(\uu)$ for every positive $\SBP\nn$-word $\uu$.
From relation \eqref{E:P:A:GarsideFraction}, we obtain 
\begin{equation}
\ww\equiv\Delta_\nn^{-\tt}\fl\nn^\tt(\ww_0)\,\fl\nn^\tto(\uu_1\,\ww_1)\,\Ldots\,\fl\nn(\uu_\tto\,\ww_\tto)\,\uu_\tt\,\ww_\tt.
\end{equation}
So the word $\vv$ introduced in Line \verb|4| is equivalent to $\Delta_\nn^\tt\,\ww$.
After Lines \verb|5| and \verb|6|, the braid $\vv$ is not left divisible by $\Delta_\nn$ unless $\tt=0$.
At the end, we have $\ww\equiv\Delta_\nn^{-\tt}\,\vv$, hence the algorithm returns the correct output.

As for the length, replacing $\xx_\ii\inv$ by $\Delta_\nn\inv\,\uu_\ii$ multiplies it by at most $2\len{\Delta_\nn}-1$, \ie, by at most~$\nn^2{-}\nn{-}1$.
Indeed, the relations in the presentation~\eqref{E:BP:Presentation} preserve the length, hence we have~$\len{\uu_\ii}=\len{\Delta_\nn}{-}1$.
By Proposition~\ref{P:GarsideQuotient}, the integer $\tt$ and the braid~$\vvt$ depend only of the braid~$\wwt$ and not on the word~$\ww$.
Hence, applying the algorithm to a geodesic word representing~$\wwt$ gives 
\[
\len{\Delta_\nn^{-\tt}\vv}\le(\nn^2{-}\nn{-}1)\cdot\lens{\wwt}.
\]

As for time complexity, the word of Line \verb|4| is obtained in time $O(\len{\ww})$.
The \verb|while| command of Line \verb|5| needs at most $\len\vv$ steps.
Testing if $\Delta_\nn\prec\vvt$ holds and computing $\Delta_\nn\backslash\vv$ need to run the right reversing process on $\Delta_\nn\inv\,\vv$.
Proposition~\ref{P:Comp:Rev} guarantees that these two operations can be done in time $O(\len{\vv})$. 
Then, from $\len{\vv}\le(\nn^2{-}\nn{-}1)\cdot\len{\ww}$, we deduce that the algorithm ends in time $O(\len{\ww})^2$.
\end{proof}

Now, for a braid $\br$, the decomposition which we shall introduce in the next proposition is called the Garside--Thurston normal form of $\br$. 
We use this normal form for computing the minimal $\kk$ such that $\br$ lies in $\BB\kk$.
Indeed, as we will see, the Garside-Thurston normal form depends only on $\br$ and not on the group $\BB\nn$ in which it is viewed.

\begin{prop}\cite[Corollary 7.5]{Dehornoy1999}
\label{P:GarsideThurston}
 Each braid $\br$ of $\BB\nn$ admits a unique decomposition~${\brr}\inv\,\brrr$ where~$\brr,\brrr$ belong to $\BP\nn$ and such that $\brr\gcdL\brrr$ is trivial.
 Moreover if $\br$ is represented by $\ww$ then the braid $\brr$ is represented by $\denL(\numR(\ww)\denR(\ww)\inv)$ and the braid $\brrr$ is represented by~$\numL(\numR(\ww)\denR(\ww)\inv)$.
\end{prop}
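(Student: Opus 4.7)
The proof breaks into three parts: construction of $\brr$ and $\brrr$ from a word representative, verification of the gcd condition, and uniqueness.

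For \emph{existence}, I start from any $\SBP\nn$-word $\ww$ representing $\br$. Right-reversing yields $\ww \revvR \numR(\ww)\denR(\ww)\inv$, presenting $\br$ as a right fraction. Left-reversing the resulting positive-negative word then produces the equivalent word $\denL(\numR(\ww)\denR(\ww)\inv)\inv\,\numL(\numR(\ww)\denR(\ww)\inv)$, which exhibits $\br$ as a left fraction $\brr\inv\brrr$ with $\brr$ and $\brrr$ in $\BP\nn$ represented precisely by the announced words.

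For the \emph{gcd condition}, I apply the left gcd formula~\eqref{E:WordLGcd} to the positive words $\uu = \denL(\numR(\ww)\denR(\ww)\inv)$ and $\vv = \numL(\numR(\ww)\denR(\ww)\inv)$. The key ingredient, and the main obstacle in the proof, is a confluence property of reversing: right-reversing the word $\uu\inv\vv$ returns precisely $\numR(\ww)\denR(\ww)\inv$, so that $\numR(\uu\inv\vv) = \numR(\ww)$ and $\denR(\uu\inv\vv) = \denR(\ww)$ as words. I would establish this by filling the right-reversing and left-reversing diagrams inside a common rectangular region in which every internal cell satisfies the reversing rule, so that the two computations appear as opposite boundary paths of a single lattice of squares and all four boundary paths represent $\br$. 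Granting the confluence, $\denL(\numR(\uu\inv\vv)\denR(\uu\inv\vv)\inv) = \uu$ and~\eqref{E:WordLGcd} collapses to $\numL(\uu\,\uu\inv) = \varepsilon$, proving that $\brr \gcdL \brrr$ is trivial.

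For \emph{uniqueness}, suppose $\brr_1\inv\brrr_1 = \brr_2\inv\brrr_2$ are two decompositions satisfying the conclusion. Since $\BB\nn$ is the group of fractions of $\BP\nn$, Ore's theorem lets me write the common value $\brr_2\brr_1\inv = \brrr_2\brrr_1\inv$ as $\mu\inv\nu$ with $\mu, \nu \in \BP\nn$ and $\mu \gcdL \nu = 1$, yielding the equations $\mu\brr_2 = \nu\brr_1$ and $\mu\brrr_2 = \nu\brrr_1$ inside $\BP\nn$. Combining these with the triviality of $\brr_i \gcdL \brrr_i$ and the standard duality in Garside monoids between left gcd and complement of a left lcm, I conclude $\mu = \nu = 1$, whence $\brr_1 = \brr_2$ and $\brrr_1 = \brrr_2$. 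Once the confluence property is in hand, both the gcd computation and this uniqueness argument reduce to routine formula manipulations in $\BP\nn$.
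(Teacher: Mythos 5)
The paper offers no proof of this proposition: it is quoted verbatim from \cite[Corollary 7.5]{Dehornoy1999}, so your attempt can only be measured against the statement itself. Your existence step is fine, and your uniqueness sketch can be completed along the lines you indicate. The genuine gap is in the gcd step: the confluence property you single out as ``the key ingredient'' is false as stated. Take $\ww=\sig1\siginv1$. No right-reversing step applies, so $\numR(\ww)=\denR(\ww)=\sig1$; left-reversing $\sig1\siginv1$ yields the empty word, so $\uu=\vv=\varepsilon$; and right-reversing $\uu\inv\vv=\varepsilon$ returns $\varepsilon$, not $\sig1\siginv1$. The same failure occurs whenever $\overline{\numR(\ww)}$ and $\overline{\denR(\ww)}$ admit a nontrivial common right divisor, which cannot be excluded for a general $S$-word $\ww$ (it can be excluded only when $\ww$ is itself of the form negative--positive). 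The ``single lattice of squares'' you propose also does not literally exist: a right-reversing cell carries single letters on its left and top edges but words $\compL(\xx,\yy)$ of length possibly greater than $1$ on the other two, so it is not one left-reversing cell but a block of them, and the two grids need not coincide edge for edge.

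What saves your argument is that you only need the element-level identity $\overline{\denL(\numR(\uu\inv\vv)\denR(\uu\inv\vv)\inv)}=\uut$, not the word-level one, and in fact the gcd condition has a direct two-line proof that bypasses formula \eqref{E:WordLGcd} entirely. Writing $\xx=\numR(\ww)$ and $\yy=\denR(\ww)$, the defining property of left reversing gives $\uut\,\xxt=\vvt\,\yyt=\xxt\lcmR\yyt$. If $\gamma\prec\uut$ and $\gamma\prec\vvt$, cancelling $\gamma$ on the left of $\uut\,\xxt=\vvt\,\yyt$ produces a common left-multiple $\mu$ of $\xxt$ and $\yyt$ with $\gamma\mu=\xxt\lcmR\yyt$; minimality of the right lcm gives $\mu=\epsilon\gamma\mu$ for some $\epsilon$, hence $\epsilon\gamma=1$ and $\gamma=1$ since a Garside monoid has no nontrivial invertible elements. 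Thus $\uut\gcdL\vvt=1$. I recommend replacing the word-level confluence claim by this lcm argument (it is also the cleanest route to your uniqueness step, where the same cancellation pattern applies).
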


Since $for$ $\kk\le\nn$ the lattice operation $\gcdL$ in $\BP\kk$ coincides with that of $\BP\nn$, a direct consequence of Proposition~\ref{P:GarsideThurston} is the following.

\begin{coro}
\label{C:GarsideThurston}
Let $\kk\le\nn$,  $\br$ in $\BB\nn$ and $\br={\brr}\inv\brrr$ be the Garside-Thurston normal form of~$\br$. We have $\br\in\BB\kk$ if and only if $\brr,\brrr$ lie in $\BP\kk$.
\end{coro}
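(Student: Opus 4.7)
The plan is to prove the two directions of the biconditional separately, with the reverse direction being essentially automatic and the forward direction relying on the uniqueness statement of Proposition~\ref{P:GarsideThurston}.

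First, the ``if'' direction: if $\brr,\brrr\in\BP\kk\subseteq\BB\kk$, then $\br=\brr\inv\brrr$ lies in $\BB\kk$ since $\BB\kk$ is a subgroup of~$\BB\nn$. No serious work is needed here.

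For the ``only if'' direction, I would apply Proposition~\ref{P:GarsideThurston} inside $\BB\kk$: since $\BP\kk$ is itself a Garside monoid with group of fractions $\BB\kk$, every element of $\BB\kk$ admits a unique decomposition $\brr'\inv\brrr'$ with $\brr',\brrr'\in\BP\kk$ and $\brr'\gcdL\brrr'=1$, where the gcd is taken in $\BP\kk$. Applying this to our $\br$ gives such a pair $(\brr',\brrr')$. The key step is then to invoke the hypothesis mentioned just before the corollary: the lattice operation $\gcdL$ computed in $\BP\kk$ coincides with the one computed in $\BP\nn$. Thus $\brr'\gcdL\brrr'$ equals $1$ also when viewed in $\BP\nn$. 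Consequently $\brr'\inv\brrr'$ is a decomposition of $\br$ in $\BP\nn$ satisfying the hypotheses of Proposition~\ref{P:GarsideThurston}, and by the uniqueness part of that proposition we must have $\brr=\brr'$ and $\brrr=\brrr'$. In particular $\brr$ and $\brrr$ lie in $\BP\kk$, as required.

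The only point that requires a bit of justification is the compatibility of $\gcdL$ between $\BP\kk$ and $\BP\nn$. This is essentially the statement, invoked in the proof of Lemma~\ref{L:Tail}, that the submonoid $\BP\kk$ of $\BP\nn$ is closed under left divisors (and hence left gcd): if $\brbr\prec\br$ for $\br\in\BP\kk$, then $\brbr\in\BP\kk$. Consequently the set of common left divisors of two elements of $\BP\kk$ is the same whether computed in $\BP\kk$ or $\BP\nn$, so their maxima agree. This is the only real content, and it was already implicit in the setup, so no new machinery is needed.
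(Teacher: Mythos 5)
Your proposal is correct and follows exactly the route the paper intends: the paper gives no explicit proof, only the remark that $\gcdL$ in $\BP\kk$ coincides with $\gcdL$ in $\BP\nn$, from which the corollary is a "direct consequence" via the uniqueness in Proposition~\ref{P:GarsideThurston} — precisely the argument you spell out. Your extra justification of the gcd compatibility via closure of $\BP\kk$ under left divisors is a reasonable filling-in of what the paper takes for granted.
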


\begin{defi}
 We define the \emph{index} of a $\SBP\nn$-word $\ww$ to be the maximal $\ii$ such that~$\ww$ contains a letter $\sig\iio$. The \emph{index} of a braid $\br$ is the minimal index of a word which represents $\br$.
\end{defi}

Obviously, the index of a braid~$\br$ is the minimal integer $\nn$ such that $\br$ lies in $\BB\nn$.

\begin{algo}\label{A:GarsideThurston}
Compute the index $\kk$ of $\wwt$ and a $\SBP\kk$-word $\wwww$ equivalent to $\ww$.

\hh\textbf{Input:} An $\SBP\nn$-word $\ww$

\hhh \verb|1.| Right reverse $\ww$ into $\www$.

\hhh \verb|2.| Left reverse $\www$ into $\wwww$.

\hhh \verb|3.| Let $\kk$ be the index of $\wwww$.

\hhh \verb|4.| \verb|Return| $(\kk,\wwww)$. 
\end{algo}

The correctness of this algorithm is a direct consequence of Proposition~\ref{P:GarsideThurston} together with Corollary~\ref{C:GarsideThurston}.
Moreover, by Proposition~\ref{P:Comp:Rev} it ends in time~ $O(\len{\ww}^2)$.

\section{The main algorithm}

\label{S:Algo}
Putting all pieces together, we can now describe our algorithm which returns a quasi-geodesic word equivalent to a given word.
However, we first recall the definition of $\sigg$-definite words and give the result of~\cite{Dehornoy2008} which will be used to prove the correctness of the algorithm.
As ever we assume $\SBP\nn\subset\SBP\nnp$ (as well as $\BB\nn\subset\BB\nnp)$ for all $n\ge2$, and we set $\Sigma=\bigcup_{\nn=2}^{\infty}\SBP\nn$.

\begin{defi}~

$(i)$ A $\Sigma$-word is said to be $\sig\ii$-positive (resp. $\sig\ii$-negative) if it contains at least one letter of the form $\sig\ii$, no letter $\siginv\ii$ (resp. at least one letter~$\siginv\ii$, no letter $\sig\ii$) and no letter $\sig\jj$ with $\jj>\ii$.

$(ii)$ A $\Sigma$-word is said to be $\sigg$-definite if it is either trivial, or $\sig\ii$-positive or $\sig\ii$-negative for a certain $\ii$.

$(iii)$ A braid is said to be $\sig\ii$-positive (resp. $\sig\ii$-negative) if it can be represented by a $\sig\ii$-positive word (resp. a $\sig\ii$-negative word).
\end{defi}

Recall form \cite{Dehornoy1994} that the celebrated Dehornoy ordering on $\BB\nn$ is defined by $\br<\brbr$ if $\br\inv\brbr$ is 
$\sig\ii$-positive for some $i\le\nn$.
The key property that will be used on the $\fl\nn$-splitting operation is its coincidence with the Dehornoy ordering $<$.

\begin{prop}\cite{Dehornoy2008}
\label{P:Coincidence}
 Let $\br$ and $\brbr$ be two braids of $\BP\nn$.
Let $(\br_\brdi,\Ldots,\br_1)$ and $(\brbr_{\brdii},\Ldots,\brbr_1)$ be the $\fl\nn$-splittings of $\br$ and $\brbr$ respectively.
Then $\br<\brbr$ holds  if and only if we have either $\brdi<\brdii$ or $\brdi=\brdii$ and, for some $\brdiii\le\brdi$, we have $\br_{\brdiv}=\brbr_{\brdiv}$ for $\brdiii<\brdiv\le\brdi$ together with $\br_\brdiii<\brbr_\brdiii$ .
\end{prop}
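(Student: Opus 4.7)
The plan is to proceed by double induction on the sum of $\fl\nn$-breadths $\brdi + \brdii$ and on $\nn$. Observing that both the Dehornoy ordering and the lex condition on splittings define strict linear orders on $\BP\nn$, it suffices by trichotomy to establish the implication from the lex condition to the Dehornoy inequality; the converse then follows automatically.

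A first reduction uses left-invariance of the Dehornoy ordering. If $\brdi = \brdii$ and $\br_\brdi = \brbr_\brdii$, left-cancelling the common top factor $\fl\nn^{\brdio}(\br_\brdi)$ produces two braids in $\BP\nn$. By the uniqueness in Proposition~\ref{P:Splitting}, the $\fl\nn$-splittings of these residues are exactly the tails of the original splittings, and the inductive hypothesis on the combined breadth applies. This leaves two cases: (a) $\brdi < \brdii$; and (b) $\brdi = \brdii$ with $\br_\brdi \ne \brbr_\brdi$.

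For case~(b), the induction on $\nn$ provides a $\sig\jj$-positive representative of $\br_\brdi\inv\brbr_\brdi$ with $\jj \le \nnt$. Applying the Garside automorphism $\fl\nn^{\brdio}$ yields a $\sig{\jj'}$-positive representative of the conjugated top difference for some $\jj' \le \nno$; since the lower factors on both sides coincide, left-invariance of the Dehornoy ordering transports this $\sigg$-positivity to a representative of $\br\inv\brbr$.

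For case~(a), the strategy is to exhibit a $\sig\nno$-positive representative of $\br\inv\brbr$ via right-reversing in the Garside monoid $\BP\nn$. The extra top pieces $\fl\nn^\brdi(\brbr_\brdip),\ldots,\fl\nn^{\brdiio}(\brbr_\brdii)$ of $\brbr$ contribute $\sig\nno$-letters that, by the maximal-tail property defining the $\fl\nn$-splitting, cannot be cancelled against either $\fl\nn^{\brdio}(\br_\brdi)\cdots\br_1$ on the left or the common lower stretch on the right. The main obstacle is precisely making this non-cancellation rigorous: the maximality of each $\br_\brdiv$ as a right divisor of the relevant partial product lying in $\fl\nn^{\brdivo}(\BP\nno)$ must be translated, via a reversing-diagram analysis, into a surviving $\sig\nno$-positive segment of the numerator of $\br\inv\brbr$.
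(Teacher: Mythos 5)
First, note that the paper does not prove this proposition: it is imported verbatim from \cite{Dehornoy2008}, so there is no internal proof to compare against. Judged as a standalone argument, your proposal has the right outer shell (trichotomy between two strict total orders reduces the equivalence to one implication; peeling off common top factors by left-invariance and the uniqueness in Proposition~\ref{P:Splitting}) but the two cases that carry all the mathematical content are not actually established.

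In case (b) you write that ``since the lower factors on both sides coincide, left-invariance \ldots{} transports this $\sigg$-positivity.'' They do not coincide: after cancelling the common factors $\br_\brdiv=\brbr_\brdiv$ for $\brdiv>\brdiii$, you are left with $\br_\brdiii<\brbr_\brdiii$ and \emph{arbitrary, unrelated} factors below position $\brdiii$. Showing that the inequality at the top position dominates whatever happens below is precisely the hard content of Dehornoy's theorem, and your argument silently reduces it to the trivial sub-case where $\br$ and $\brbr$ differ only in the top entry. There is a second problem in the same case: $\fl\nn$ sends $\sig\ii$ to $\sig{\nn-\ii}$, so the image under $\fl\nn^{\brdio}$ (for $\brdio$ odd) of a $\sig\jj$-positive word on $\sig1,\Ldots,\sig\nnt$ has $\sig\nno$ as its highest-index letter, occurring with both signs whenever $\sig1$ does; it is not $\sigg$-definite, so ``applying the Garside automorphism yields a $\sig{\jj'}$-positive representative'' is false as stated. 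In case (a) you name the needed non-cancellation statement and then say it ``must be translated, via a reversing-diagram analysis, into a surviving $\sig\nno$-positive segment''---that is an announcement of the key lemma, not a proof of it. In Dehornoy's actual argument this role is played by a genuine $\sigg$-positivity lemma for products of the form $\fl\nn^{\brdio}(\br_\brdi)\cdots\br_1$ with $\brdi\ge2$ and $\br_\brdi\ne1$ (the same mechanism that underlies Proposition~\ref{P:Quotient} here), and it requires a substantial induction of its own. As it stands, both essential cases are gaps.
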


In \cite{Dehornoy2008}, Dehornoy proves that the minimal positive braid of a given  $\fl\nn$-breadth~$\brdipp$ (see \ref{D:Splitting}) with $\brdi\ge0$ is $\widehat\Delta_{\nno,\brdi}=\Delta_\nn^{\brdi}\,\Delta_\nno^{-\brdi}$, and the $\fl\nn$-splitting of the latter is the following sequence of length $\brdipp$
\begin{equation}
(\sig1, \sig\nno\,...\,\sig2\,\sig1^2,\Ldots,\sig\nno\,...\,\sig2\,\sig1^2,\sig\nno\,...\,\sig2\,\sig1,1).
\end{equation}
As the braid $\Delta_\nno$ lies in $\BP\nno$, we deduce that the $\fl\nn$-splitting of $\Delta_\nn^\brdi$ is the following sequence of length $\brdipp$
\begin{equation}
 (\sig1, \sig\nno\,...\,\sig2\,\sig1^2,\Ldots,\sig\nno\,...\,\sig2\,\sig1^2,\sig\nno\,...\,\sig2\,\sig1,\Delta_\nno^{\brdio}).
\end{equation}

The idea of our algorithm is that we can easily decide if a quotient $\Delta_\nn^{-\tt}\,\br$ with $\br$ lying in $\BP\nn$ is $\sig\nno$-negative or not.

\begin{lemm}
\label{L:Coincidence}
Assume that $\br$ is a braid of $\BP\nn$ such that $\widehat\Delta_{\nno,\brdi}\le\br\le\Delta_\nn^\brdi$ holds, then the quotient~$\Delta_\nn^{-\brdi}\br$ lies in $\BB\nno$.
\end{lemm}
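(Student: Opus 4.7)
The plan is to invoke Proposition~\ref{P:Coincidence} together with the explicit descriptions, given just before the lemma, of the $\fl\nn$-splittings of $\widehat\Delta_{\nno,\brdi}$ and $\Delta_\nn^{\brdi}$. The first step is to observe that both of these reference braids have the same $\fl\nn$-breadth~$\brdipp$, and that their splittings coincide entry by entry except at the last (lowest-indexed) position, where $\widehat\Delta_{\nno,\brdi}$ contributes $1$ while $\Delta_\nn^{\brdi}$ contributes a power of $\Delta_\nno$. In particular, the common upper portion of the two splittings reproduces $\widehat\Delta_{\nno,\brdi}$ itself.

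Next, I would feed the hypothesis $\widehat\Delta_{\nno,\brdi}\le\br\le\Delta_\nn^{\brdi}$ into Proposition~\ref{P:Coincidence}. Since the breadth of a braid is the primary key of the Dehornoy comparison, the equality of breadths of the two bounds forces the $\fl\nn$-breadth of $\br$ to be exactly $\brdipp$ as well. Writing the splitting of $\br$ as $(\br_{\brdipp},\ldots,\br_1)$, the lexicographic comparison provided by Proposition~\ref{P:Coincidence}, applied once against each bound, forces $\br_{\brdiv}$ to coincide with the matching entries of the two bounds for every $\brdiv\ge 2$: any earlier deviation would push $\br$ strictly outside the interval. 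Only the bottom entry $\br_1$, which lies in $\BP\nno$ by definition of the splitting, remains free, subject to the constraint $1\le\br_1\le\Delta_\nno^{\brdio}$.

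The conclusion is then immediate. Since the forced upper part of the splitting of $\br$ equals the upper part of the splitting of $\widehat\Delta_{\nno,\brdi}$ (whose bottom entry is $1$), we obtain $\br = \widehat\Delta_{\nno,\brdi}\,\br_1$. Substituting $\widehat\Delta_{\nno,\brdi} = \Delta_\nn^{\brdi}\Delta_\nno^{-\brdi}$ yields $\Delta_\nn^{-\brdi}\,\br = \Delta_\nno^{-\brdi}\,\br_1$, which manifestly lies in~$\BB\nno$ since both $\Delta_\nno$ and $\br_1$ do. The delicate step is the lexicographic comparison: carefully tracking which entries of the splitting of $\br$ are pinned down by the two inequalities and which remain free is what makes the argument go through.
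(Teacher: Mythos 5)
Your proposal is correct and follows essentially the same route as the paper: use Proposition~\ref{P:Coincidence} to pin down the $\fl\nn$-splitting of $\br$ as agreeing with those of the two bounds in all entries except the last, deduce $\br=\widehat\Delta_{\nno,\brdi}\,\br_1$ with $\br_1\in\BP\nno$, and conclude $\Delta_\nn^{-\brdi}\br=\Delta_\nno^{-\brdi}\br_1\in\BB\nno$. Your spelled-out justification of the lexicographic pinning step is in fact slightly more detailed than the paper's, which simply asserts it.
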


\begin{proof}
 The relation $\widehat\Delta_{\nno,\brdi}\le\br\le\Delta_\nn^\brdi$ and Proposition~\ref{P:Coincidence} imply that the $\fl\nn$-splitting of $\br$ is the following sequence of length $\brdipp$
\begin{equation}
  ( \sig1, \sig\nno\,...\,\sig2\,\sig1^2,\Ldots,\sig\nno\,...\,\sig2\,\sig1^2,\sig\nno\,...\,\sig2\,\sig1,\br_1),
\end{equation}
with $1\le\br_1\le\Delta_\nno^{\brdi}$.
Hence $\br$ is equal to $\widehat\Delta_{\nno,\brdi}\,\br_1$ where $\br_1$ belongs to $\BP\nno$.
Then, as the quotient $\Delta_\nn^{-\brdi}\,\br$ is equal to $\Delta_\nno^{-\brdi}\,{\widehat\Delta_{\nno,\brdi}}\,\br$, we obtain $\Delta_\nn^{-\brdi}\,\br=\Delta_\nno^{-\brdi}\,\br_1$.
As $\br_1$ and $\Delta_\nno$ lie in~$\BB\nno$ the braid $\Delta_\nn^{-\brdi}\,\br$ lies in $\BB\nno$.
\end{proof}

\begin{prop}
\label{P:Quotient}
Assume $\nn\ge3$ and $\br$ is a braid of $\BP\nn$.
Let $\tt$ be a positive integer  and $\brdi$ the~$\fl\nn$-breadth of $\br$.
If $\tt\ge\brdio$ holds then the quotient $\Delta_\nn^{-\tt}\,\br$ is $\sig\nno$-negative.
Otherwise it is not $\sig\nno$-negative.
\end{prop}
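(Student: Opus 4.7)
The plan is to prove both implications by bridging them through the characterization that a braid $\gamma$ of $\BB\nn$ is $\sig\nno$-negative if and only if $\gamma<\alpha$ holds for every $\alpha\in\BB\nno$. The forward direction is immediate: for $\alpha\in\BB\nno$, the product $\gamma\inv\alpha$ admits a $\Sigma$-representative containing $\sig\nno$ but no $\siginv\nno$ and no higher-index letter (because $\gamma\inv$ is $\sig\nno$-positive and $\alpha$ uses only $\sig1,\Ldots,\sig\nnt$), so $\gamma\inv\alpha>1$. Conversely, if $\gamma<\alpha$ for every $\alpha\in\BB\nno$, then $\gamma<1$, so by Dehornoy's dichotomy $\gamma$ is $\sig\ii$-negative for some $\ii$; were $\ii<\nno$, we would have $\gamma\in\BB\nno$ and the choice $\alpha=\gamma$ would give the contradiction $\gamma<\gamma$.

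For the implication $\tt\ge\brdio \Rightarrow \sig\nno$-negativity, I proceed by strong induction on $\brdi$. The base cases $\brdi=0,1$ rest on the identity $\Delta_\nn=(\sig1\,\sig2\,\Ldots\,\sig\nno)\,\Delta_\nno$, which rewrites $\Delta_\nn\inv$ as $\Delta_\nno\inv\,\siginv\nno\,\siginv\nnt\,\Ldots\,\siginv1$: a $\sig\nno$-negative word with exactly one $\siginv\nno$. Iterating yields a $\sig\nno$-negative representative of $\Delta_\nn^{-\tt}$, and multiplying on the right by an element of $\BP\nno$ introduces no $\sigpm\nno$. For the inductive step $\brdi\ge2$, I split $\br=\fl\nn^\brdio(\br_\brdi)\cdot\br'$, where $\br'=\fl\nn^\brdit(\br_\brdio)\,\Ldots\,\br_1$ has $\fl\nn$-breadth at most $\brdio$ by the uniqueness in Proposition~\ref{P:Splitting}. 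The commutation $\Delta_\nn^{-m}\beta=\fl\nn^m(\beta)\,\Delta_\nn^{-m}$ combined with $\fl\nn^2=\mathrm{id}$ yields
\[
\Delta_\nn^{-\brdio}\,\br \;=\; \br_\brdi\cdot\Delta_\nn^{-\brdio}\,\br'.
\]
The induction hypothesis makes the right-hand factor $\sig\nno$-negative, and prepending $\br_\brdi\in\BP\nno$ introduces no $\sigpm\nno$. The case $\tt>\brdio$ follows by left-concatenating a $\sig\nno$-negative representative of $\Delta_\nn^{-(\tt-\brdio)}$.

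For the implication $\tt\le\brdit \Rightarrow \sig\nno$-negativity fails, I exhibit through the characterization a witness $\alpha\in\BB\nno$ with $\Delta_\nn^{-\tt}\,\br\ge\alpha$. Taking $\alpha=\Delta_\nno^{-\tt}$ and using left-invariance of the Dehornoy order, the claim reduces to $\br\ge\widehat\Delta_{\nno,\tt}=\Delta_\nn^\tt\,\Delta_\nno^{-\tt}$. Now $\widehat\Delta_{\nno,\tt}$ is the Dehornoy-minimal positive braid of $\fl\nn$-breadth $\ttpp$ (as recalled just before the proposition), and Proposition~\ref{P:Coincidence} states that the Dehornoy order on $\BP\nn$ refines the order by $\fl\nn$-breadth. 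Since $\br$ has breadth $\brdi\ge\ttpp$, these two facts combine to give $\br\ge\widehat\Delta_{\nno,\tt}$; therefore $\Delta_\nn^{-\tt}\,\br\ge\Delta_\nno^{-\tt}\in\BB\nno$, and the characterization rules out $\sig\nno$-negativity.

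The main delicate step will be the inductive rearrangement in (a): one must verify both that $\br'$ has strictly smaller $\fl\nn$-breadth and that the rewrite $\Delta_\nn^{-\brdio}\,\br=\br_\brdi\cdot\Delta_\nn^{-\brdio}\,\br'$ hides no $\sig\nno$ inside the $\br_\brdi$ factor. The first is ensured by the uniqueness in Proposition~\ref{P:Splitting}, and the second by $\br_\brdi\in\BP\nno$, so that only the commuted $\Delta_\nn^{-\brdio}$ contributes any $\sigpm\nno$.
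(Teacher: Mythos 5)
Your proof is correct, and the two halves compare differently with the paper's argument. For the direction $\tt\ge\brdio$, your induction is essentially the paper's proof in recursive clothing: the paper pushes all $\brdio$ powers of $\Delta_\nn$ through the splitting in one telescoping computation to get $\Delta_\nn^{-\tt+\brdi-1}\,\br_\brdi\,\Delta_\nn\inv\,\Ldots\,\br_2\,\Delta_\nn\inv\,\br_1$ and reads off $\sig\nno$-negativity, which is exactly what your recursion unwinds to. Note that your detour through the $\fl\nn$-breadth of $\br'$ is both the only under-justified step (the uniqueness in Proposition~\ref{P:Splitting} does not immediately give that the splitting of $\br'$ is the truncated sequence; one must check that the $\fl\nn^\kk(\BP\nno)$-tails of the right divisor $\br'$ coincide with those of $\br$) and an unnecessary one: the argument only needs the product decomposition $\br=\fl\nn^\brdio(\br_\brdi)\cdots\br_1$ with each $\br_\kk\in\BP\nno$, not any control on breadths. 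For the converse direction you genuinely diverge from the paper. The paper splits into $\tt<\brdit$, where Proposition~\ref{P:Coincidence} gives $\Delta_\nn^\tt<\br$ and hence $\sig\ii$-positivity of the quotient, and the boundary case $\tt=\brdit$, where Lemma~\ref{L:Coincidence} is invoked to show that the quotient actually lies in $\BB\nno$. You instead package the order-theoretic facts into the characterization ``$\gamma$ is $\sig\nno$-negative iff $\gamma<\alpha$ for all $\alpha\in\BB\nno$'' and exhibit the single witness $\Delta_\nno^{-\tt}$, reducing everything to $\br\ge\widehat\Delta_{\nno,\tt}$, which follows uniformly from minimality of $\widehat\Delta_{\nno,\tt}$ among braids of breadth $\ttpp$ together with Proposition~\ref{P:Coincidence}. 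This unifies the two subcases and bypasses Lemma~\ref{L:Coincidence} entirely; the price is that you lean explicitly on the comparison and acyclicity properties of the Dehornoy ordering (standard, and in any case already used implicitly by the paper when it passes from $\Delta_\nn^\tt<\br$ to ``not $\sig\nno$-negative''), and you learn slightly less in the boundary case, where the paper shows the quotient is actually an element of $\BB\nno$ rather than merely bounded below by one.
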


\begin{proof}
 Let $(\br_\brdi,\Ldots,\br_1)$ be the $\fl\nn$-splitting of $\br$.
Then the braid $\Delta_\nn^{-\tt}\br$ is equal to
\begin{equation}
\label{E:P:Quotient:1}
\Delta_\nn^{-\tt}\cdot\fl\nn^\brdio(\br_\brdi)\cdot\Ldots\cdot\fl\nn(\br_2)\cdot\br_1.
\end{equation}
Pushing $\brdio$ powers of $\Delta_\nn$ to the right in \eqref{E:P:Quotient:1} and dispatching them between the factors $\br_\kk$, we find
\begin{align*}
 \Delta_\nn^{-\tt}\br&\equiv\Delta_\nn^{-\tt}\cdot\fl\nn^\brdio(\br_\brdi)\cdot\Ldots\cdot\fl\nn(\br_2)\cdot\br_1\\
&\equiv\Delta_\nn^{-\tt+\brdio}\cdot\Delta_\nn^{-\brdio}\fl\nn^\brdio(\br_\brdi)\cdot\Ldots\cdot\fl\nn(\br_2)\cdot\br_1\\
&\equiv\Delta_\nn^{-\tt+\brdio}\cdot\br_\brdi\cdot\Delta_\nn\inv\cdot\Delta_\nn^{-\brdit}\cdot\Ldots\cdot\fl\nn(\br_2)\cdot\br_1\\
&\equiv ... \equiv \Delta_\nn^{-\tt+\brdi-1}\ \br_\brdi\,\Delta_\nn\inv\ \br_\brdio\,\Delta_\nn\inv\,\Ldots\,\br_2\,\Delta_\nn\inv\ \br_1.
\end{align*}
If the relation $\tt\ge\brdio$ holds then the braid
\begin{equation}
\label{E:P:Quotient:2}
\Delta_\nn^{-\tt+\brdi-1}\ \br_\brdi\,\Delta_\nn\inv\ \br_\brdio\,\Delta_\nn\inv\,\Ldots\,\br_2\,\Delta_\nn\inv\ \br_1, 
\end{equation}
 is $\sig\nno$-negative.
Indeed, by definition, the braid $\Delta_\nn\inv$ is $\sig\nno$-negative, while for each $\kk$ the braid~$\br_\kk$ lies in $\BP\nno$.
So, as $-\tt+\brdi-1$ is nonpositive, the expression \eqref{E:P:Quotient:2} contains $\tt$ letters $\siginv\nno$ and no letter $\sig\nno$.

Now assume $\tt<\brdit$.
The $\fl\nn$-breadth of $\Delta_\nn^\tt$ is $\ttpp$ and we have $\ttpp<\brdi$.
Then Proposition~\ref{P:Coincidence} implies $\Delta_\nn^\tt<\br$, \ie, $\Delta_\nn^{-\tt}\,\br$ is $\sig\ii$-positive for a certain $\ii$, hence it is not $\sig\nno$-negative.

Finally assume $\tt=\brdit$.
If the relation $\Delta_\nn^\tt<\br$ holds we concluse as in the previous case.
Then assume $\br\le\Delta_\nn^\tt$.
As the $\fl\nn$-breadth of $\br$ is $\brdi$, which is equal to~$\ttpp$, Proposition~\ref{P:Coincidence} implies $\widehat\Delta_{\nno,\tt}\le\br$.
Then by Lemma~\ref{L:Coincidence} the quotient ~$\Delta_\nn^{-\tt}\,\br$ lies in $\BB\nno$, hence it is not $\sig\nno$-negative.
\end{proof}

The following algorithm takes in entry a braid word $w$ representing a braid~$\br$ and it returns a $\sigma$-definite word representing $\br$. The main idea is to bring all possible cases to the case where~$\br$ is $\sig\nno$-negative, \ie, when $\br$ satisfy the conditions of~Proposition~\ref{P:Quotient}.

\begin{algo}\label{A:Main}
Compute a $\sigma$-definite representative.

\hh\textbf{Input:} A $\SBP\nn$-word $\ww$

\hhh \verb|1.| \verb|Put| $\ee=1$.

\hhh \verb|2.| Let $(\kk,\vv)$ be the output of Algorithm~\ref{A:GarsideThurston} applied to $\ww$.

\hhh \verb|3.| Use Algorithm~\ref{A:GarsideFraction} to compute $\Delta_\kk^{-\tt}\,\uu\equiv\vv^\ee$.

\hhh \verb|4.| \verb|If| $\tt=0$ or $\kk=2$ \verb|then return| $\Delta_\kk^{-\tt}\,\uu$.

\hhh \verb|5.| Use Algorithm~\ref{A:PhiNSplitting} to compute the $\fl\kk$-splitting $(\uu_\brdi,\Ldots,\uu_1)$ of $\uu$.

\hhh \verb|6.| \verb|If| $\tt\ge\brdio$ \verb|then return| $(\Delta_\kk^{-\tt+\brdi-1}\ \uu_\brdi\,\Delta_\kk\inv\ \uu_\brdio\,\Delta_\kk\inv\,\Ldots\,\uu_2\,\Delta_\kk\inv\ \uu_1)^\ee$.

\hhh \verb|7.| \verb|Else put| $\ee=-1$ and \verb|goto Line 3|.

\end{algo}

\begin{prop}
\label{P:Main}
 Algorithm~\ref{A:Main} ends and returns in time $O(\len\ww)^2$ a $\sigg$-definite word~$\www$ equivalent to $\ww$ with $\len{\www}\le(\nn^2-\nn-1) \cdot\lens\wwt$, where $\lens{\br}$ is the minimal length of a $\Sigma$-word representing~$\br$.
\end{prop}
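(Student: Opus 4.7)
The plan is to verify three assertions in turn---termination with a $\sigma$-definite output equivalent to $\ww$, the length bound, and the time bound---with the heart of correctness being a case analysis on the return statements, using Proposition~\ref{P:Quotient}. Line~7 is the only backward jump and it only flips $\ee$ from $1$ to $-1$, so the algorithm performs at most two passes through Lines~3--7. If a pass returns at Line~4, then either $\tt=0$---so the output $\uu$ is a positive $\SBP\kk$-word representing $\wwt^\ee$, hence $\sig\jj$-positive for the largest index $\jj$ of a letter appearing, and $\jj=\kk-1$ by minimality of $\kk$---or $\kk=2$, in which case the non-divisibility guarantee from Proposition~\ref{P:A:GarsideFraction} forces $\uu=\varepsilon$ as soon as $\tt\ge1$, so the output is $\sig1^{-\tt}$. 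If a pass returns at Line~6, the displayed word is $\sig\kko$-negative: expanding each $\Delta_\kk\inv$ via the factorization $\Delta_\kk=(\sig1\cdots\sig\kko)\cdot\Delta_\kko$ contributes exactly one letter $\sig\kko\inv$ and no positive occurrence of $\sig\kko$, while every $\uu_\ii$ lies in $\SBP\kko^\ast$; Proposition~\ref{P:Quotient} applied with $\tt\ge\brdio$ then guarantees that this word represents $\Delta_\kk^{-\tt}\uu\equiv\vv^\ee$, so its $\ee$-th power represents $\ww$.

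To handle the transition at Line~7, I would observe that reaching it on the first pass means $\vvt$ is not $\sig\kko$-negative (Proposition~\ref{P:Quotient}); since $\vvt=\wwt$ has index $\kk\ge3$, Dehornoy's theorem forces it to be $\sig\kko$-definite, hence $\sig\kko$-positive, and therefore $\vvt\inv$ is $\sig\kko$-negative. On the second pass, Line~4 cannot trigger ($\tt'=0$ would make $\vvt\inv$ a positive braid, hence $\sig\kko$-positive by the same index argument, contradicting $\sig\kko$-negativity, and the value $\kk=2$ was already excluded), so Proposition~\ref{P:Quotient} yields $\tt'\ge\brdi'-1$ and Line~6 returns; taking the $(-1)$-th power then delivers a $\sig\kko$-positive word for $\wwt$.

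For the length bound, I would use that the defining relations of $\BP\nn$ preserve word length, so any two positive $\SBP\nn$-words representing the same positive braid have the same length. Applied to the equality of positive braids $\uut=\fl\kk^{\brdio}(\uut_\brdi)\cdots\uut_1$, this yields $\sum_\ii \len{\uu_\ii}=\len\uu$, so the Line~6 output has length $\tt\cdot\len{\Delta_\kk}+\len\uu=\len{\Delta_\kk^{-\tt}\uu}$, which by Proposition~\ref{P:A:GarsideFraction} applied to $\vv^\ee$ is at most $(\kk^2-\kk-1)\cdot\lens\wwt\le(\nn^2-\nn-1)\cdot\lens\wwt$. The Line~4 outputs satisfy the same bound directly from Proposition~\ref{P:A:GarsideFraction}.

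For the time bound, Algorithms~\ref{A:GarsideThurston}, \ref{A:GarsideFraction} and \ref{A:PhiNSplitting} each run in quadratic time in their input length, and the outer loop executes at most twice. The main obstacle is to confirm that the words fed into successive subroutines remain of length $O(\len\ww)$: this follows from the linear bound in Proposition~\ref{P:A:GarsideFraction} controlling the output of Algorithm~\ref{A:GarsideFraction}, together with the analogous linearity of the Garside--Thurston normal form produced by Algorithm~\ref{A:GarsideThurston}. Summing these contributions gives the claimed $O(\len\ww^2)$.
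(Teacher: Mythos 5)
Your argument is correct and follows essentially the same route as the paper's: a case analysis on the three return points, with Proposition~\ref{P:Quotient} supplying both the $\sig{\kk-1}$-negativity at Line~6 and the guarantee that the second pass (after flipping $\ee$) terminates, plus the observation that length-preserving relations make the Line~6 output exactly as long as $\Delta_\kk^{-\tt}\uu$. You are in fact slightly more careful than the paper at two points the original glosses over --- ruling out a Line~4 return on the second pass, and justifying that intermediate word lengths stay linear for the time bound --- and these additions are sound.
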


\begin{proof}
We use Algorithm~\ref{A:GarsideThurston} to compute the index $\kk$ of $\wwt$ and a 
 $\SBP\kk$-word $\vv$ equivalent to $\ww$.
In particular, the braid $\wwt$ is either $\sig\kko$-positive or $\sig\kko$-negative.

Next, we use Algorithm~\ref{A:GarsideFraction} to compute a quotient $\Delta_\kk^{-\tt}\,\uu$ that is equivalent to $\vv$ and so to $\ww$.
By Proposition~\ref{P:GarsideQuotient} the exponent $\tt$ and the positive braid $\uut$ depends only of the braid $\wwt$ and not on the word $\vv$.
Moreover, we have 
\[
\len{\Delta_\kk^{-\tt}\uu}\le(\kk^2{-}\kk{-}1)\lens{\wwt}\le(\nn^2{-}\nn{-}1)\lens{\wwt}.
\]
If $\tt$ is equal to $0$ then the quotient $\Delta_\kk^{-\tt}\uu$ is equal to $\uu$, that is a positive word, hence a $\sigg$-definite word.
If $\nn$ is equal to $2$ with $\tt\not=0$ then $\uu$ is empty and $\Delta_\kk^{-\tt}\uu$ is equal to~$\Delta_\kk^{-\tt}$, that is a negative word, hence a $\sigg$-definite word.

Next, we use Algorithm~\ref{A:PhiNSplitting} to compute the $\fl\kk$-splitting $(\uut_\brdi,\Ldots,\uut_1)$ of $\uut$.
Then the word $\Delta_\kk^{-\tt}\uu$ is equivalent to $\uuu$ defined by
\begin{equation}
\label{E:P:Main:2}
\uuu=\Delta_\kk^{-\tt+\brdi-1}\ \uu_\brdi\,\Delta_\kk\inv\ \uu_\brdio\,\Delta_\kk\inv\,\Ldots\,\uu_2\,\Delta_\kk\inv\ \uu_1. 
\end{equation}

If the relation $\tt\ge\brdio$ holds then the word $\uuu$ is $\sig\kko$-negative---see proof of~Proposition~\ref{P:Quotient}---hence it is $\sigg$-definite.
So in this case the algorithm returns a $\sigg$-definite word equivalent to~$\ww$.

Now, assume $\tt <\brdio$.
In this case we redo the same process with the word $\ww\inv$.
Note that, as the index of $\ww$ and $\ww\inv$ are the same, we can directly go to Line \verb|3| of the algorithm.
In this case, by Proposition~\ref{P:Quotient}, the braid $\wwt$ is not $\sig\kko$-negative, \ie, it is $\sig\kko$-positive or it lies in $\BB\mm$ with $\mm<\kk$.
As $\kk$ is the index of $\wwt$, the braid $\wwt$ is $\sig\kko$-positive.
So the braid represented by~$\vv\inv$ is $\sig\kko$-negative.
Hence the new value of $\tt$ and $\brdi$ satisfy the relation~$\tt\ge\brdio$ and the algorithm ends.

For length complexity, the length of the $\SBP\kk$-word $\uuu$ given in  \eqref{E:P:Main:2} is equal to the length of the $\SBP\kk$-word $\Delta_\kk^{-\tt}\,\uu$.
By Proposition~\ref{P:GarsideQuotient} we have  
\[
\Delta_\kk^{-\tt}\,\uu\le(\kk^2{-}\kk{-}1)\cdot\lens{\vvt^\ee}=(\kk^2{-}\kk{-}1)\cdot\lens{\wwt^\ee}\le(\nn^2{-}\nn{-}1)\cdot\lens{\wwt^\ee}.
\]
Then, $\lens{\wwt\inv}=\lens{\wwt}$ implies $\len{\uuu}\le(\nn^2{-}\nn{-}1)\lens{\wwt}$.
\end{proof}

\section{Dual braid monoid}

\label{S:Dual}

The \emph{dual braid monoid} is another submonoid of $\BB\nn$. It is generated by a subset of $\BB\nn$ that properly contains $\{\sig1,\ldots,\sig\nno\}$, and consists of the so-called \emph{Birman-Ko-Lee generators} introduced in~\cite{Birman1998}.

\begin{defi}\label{D:Aij}
~

$(i)$  For $1\le\indi<\indii$, we put $\aa\indi\indii=\sig\indi\Ldots\sig\indiit\ \sig\indiio\ \siginv\indiit\Ldots\siginv\indi$.

$(ii)$ For $\nn\ge2$, the set $\SBKL\nn$ is defined to be $\{\aa\indi\indii\mid1\le\indi<\indii\le\nn\}$.

$(iii)$ The \emph{dual braid monoid}~$\BKL\nn$, is the submonoid of $\BB\nn$ generated by $\SBKL\nn$.
\end{defi}

For $\indi<\indii$, we denote by $\llbracket\indi,\indii\rrbracket$ the interval $\{\indi,\Ldots,\indii\}$ of $\mathbb{N}$, and we say that $\llbracket\indi,\indii\rrbracket$ is nested in~$\llbracket\indiii,\indiv\rrbracket$ if we have $\indiii<\indi<\indii<\indiv$.
A presentation of $\BKL\nn$ in terms of $\aa\indi\indi$ is as follows.

\begin{prop}\cite{Birman1998}
\label{P:BKL} In terms of the $\aa\indi\indii$, the monoid $\BKL\nn$ is presented by
\begin{align*}
 \aa\indi\indii\,\aa\indiii\indiv&=\aa\indiii\indiv\,\aa\indi\indii \quad \text{for $\llbracket\indi,\indii\rrbracket$ and $\llbracket\indiii,\indiv\rrbracket$ disjoint or nested},\\
\aa\indi\indii\,\aa\indii\indiii&= \aa\indii\indiii\, \aa\indi\indiii = \aa\indi\indiii\, \aa\indi\indii \quad \text{for $1\le\indi<\indii<\indiii\le\nn$.}
\end{align*}
\end{prop}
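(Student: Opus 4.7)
The plan is to establish the presentation in two stages: first, verify the listed relations hold in $\BB\nn$ under Definition~\ref{D:Aij}, yielding a surjective homomorphism from the abstract monoid $M$ presented by these relations onto $\BKL\nn$; second, show this homomorphism is injective via Garside theory. The first stage is a direct calculation; the second is structural.

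For the first stage, I would substitute $\aa\indi\indii=\sig\indi\cdots\sig\indiit\,\sig\indiio\,\siginv\indiit\cdots\siginv\indi$ into each defining relation and reduce using the Artin braid relations~\eqref{E:BP:Presentation}. When $\llbracket\indi,\indii\rrbracket$ and $\llbracket\indiii,\indiv\rrbracket$ are disjoint, the two factors involve disjoint sets of $\sig\kk$, which pairwise commute, so the relation is immediate. When one interval is nested in the other, the inner conjugation word lives in a subgroup normalised by the outer conjugation once the boundary letters have been moved past via short braid-relation manipulations. The three-term identities reduce by index shifting to the single case $(\indi,\indii,\indiii)=(1,2,3)$, which is a short hand computation in $\BB3$.

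For the second stage, I would follow the Garside-theoretic strategy of Birman and Ko-Lee. One first inspects the relations and checks that the presentation is \emph{complemented}: for every ordered pair of distinct generators there is at most one defining relation beginning with that pair. Then, using the reversing technique of Section~\ref{S:Rever}, one verifies that the reversing process terminates and is confluent; by Dehornoy's cube condition, $M$ is then cancellative and admits right and left lcms, hence embeds into its group of fractions $G$. Exhibiting mutually inverse homomorphisms between $G$ and $\BB\nn$ then completes the identification $M\cong\BKL\nn$: the forward map comes from the first stage, while the inverse sends $\sig\ii\mapsto\aa\ii\iip$ and is well-defined because the Artin braid relations are consequences of the dual ones (the commutation of $\sig\ii,\sig\jj$ for $|i-j|\ge2$ follows from the disjoint-interval relation, and the braid relation $\sig\ii\sig\iip\sig\ii=\sig\iip\sig\ii\sig\iip$ from the triangular identity on $(\indi,\indii,\indiii)=(\ii,\iip,\iipp)$).

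The main obstacle is the confluence verification in the second stage: for every triple of dual generators one must check that the two reversing orders of a three-generator pattern $\aa\indi\indii\inv\,\aa\indiii\indiv\,\aa\indv\indvi\inv$ (in its various signed variants) yield the same positive word. This is a finite but delicate case analysis, partitioned by the mutual configurations (disjoint, nested, or sharing an endpoint) of the three underlying intervals $\llbracket\indi,\indii\rrbracket$, $\llbracket\indiii,\indiv\rrbracket$, $\llbracket\indv,\indvi\rrbracket$; each configuration type must be closed by a common positive word built from the defining relations.
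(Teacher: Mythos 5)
The paper offers no proof of this proposition: it is imported verbatim from \cite{Birman1998}, so there is no internal argument to measure yours against. Your two-stage plan --- surjectivity by checking the listed relations in $\BB\nn$ under Definition~\ref{D:Aij}, then injectivity by showing the abstractly presented monoid is a cancellative Ore/Garside monoid whose group of fractions is identified with $\BB\nn$ by a pair of mutually inverse homomorphisms --- is exactly the standard route (the Garside-theoretic form of the argument in the cited source and in Dehornoy's subsequent work), and you correctly single out the cube-condition/confluence case analysis on triples of generators as the genuinely laborious part.

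Two concrete repairs are needed. First, your claim that the three-term identities ``reduce by index shifting to the single case $(1,2,3)$'' fails as stated: the shift endomorphism $\sig\ii\mapsto\sigma_{i+1}$ translates a triple $p<q<r$ uniformly, so it cannot turn a non-consecutive triple such as $(1,3,7)$ into $(1,2,3)$. You must instead verify $a_{p,q}\,a_{q,r}=a_{q,r}\,a_{p,r}=a_{p,r}\,a_{p,q}$ for arbitrary $p<q<r$, e.g.\ by induction on $q-p$ and $r-q$ directly from the word of Definition~\ref{D:Aij}\,$(i)$ and the Artin relations, or by producing an explicit conjugating braid carrying the standard arcs on punctures $1,2,3$ to the arcs on $p,q,r$ --- either way this is no longer a single $\BB3$ computation. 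Second, to embed the presented monoid $M$ into a group of fractions you need not only cancellativity and \emph{conditional} lcms (which the cube condition, together with Noetherianity from homogeneity of the relations, provides) but the actual existence of common right multiples; the usual fix, absent from your sketch, is to check inside $M$ that $\ddd\nn=a_{1,2}a_{2,3}\cdots a_{\nno,\nn}$ is a right multiple of every generator. With these two points supplied, and the routine verification that the composite $G\to\BB\nn\to G$ fixes each $a_{p,q}$ (which follows from the iterated conjugation identity $a_{q-1,q}\,a_{p,q-1}\,a_{q-1,q}^{-1}=a_{p,q}$ derived from the three-term relations), your outline is sound.
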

As the positive braid monoid, we can endow the Birman--Ko--Lee monoid~$\BKL\nn$ with a Garside structure. The corresponding Garside element is 
\[
\ddd\nn=\aa12\,\aa23\,\ldots\,\aa\nno\nn.
\] 

We denote by $\ff\nn$ the Garside automorphism of $\BKL\nn$, \ie, the application defined on~$\BKL\nn$ by~$\ff\nn(\br)=\ddd\nn\,\br\,\ddd\nn\inv$.

An analog of the alternating normal form of the positive braid monoid $\BP\nn$ exists for the dual braid monoid~$\BKL\nn$: the \emph{rotating normal form} (see \cite{Fromentin2008,Fromentin2008a} for more details about this normal form). The rotating normal form is also based on an operation of splitting: the  $\ff\nn$-splitting.
Moreover, for each result on the alternating normal form used in this paper there exists a counterpart in the $\BKL\nn$context, see \cite{Fromentin2008} or \cite{Fromentin2008b} for more details.

A $\SBKL\nn$-word $\ww$ is said to be $\sigg$-definite if all the letters $\aa\indi\indii^\pm$ with highest $\indii$ appear only positively or only negatively.
This definition coincides with that given for $\SBP\nn$-words if we translate each letter $\aa\indi\indii$ of an $\SBKL\nn$-word to the $\SBP\nn$-word given in Definition~\ref{D:Aij}\,$(i)$.
Hence all the previous algortihms can be translated to the dual language, replacing~$\fl\nn$ by $\ff\nn$, $\Delta_\nn$ by $\ddd\nn$ and $\SBP\nn$ by~$\SBKL\nn$.
One of the advantage of the dual braid monoid is that its Garside element $\ddd\nn$ has length $\nno$, while $\Delta_\nn$ has length $\frac{n(n{-}1)}{2}$.
Therefore in the dual context, Algorithm~\ref{A:GarsideFraction} runing on $\ww$ returns a word $\ddd\nn^{-\tt}\,\uu$ whose length is at most $(2\nn{-}3)\lena{\wwt}$,where, for $\br\in\BB\nn$, $\lena{\br}$ denotes the word length of $\br$ with respect to $\SBKL\nn$ (as $\SBKL\nn$ contains $\SBP\nn$, we have necessary $\lena{\br}\le\lens{\br}$ for all braid~$\br$ of $\BB\nn$).
Hence Algorithm~\ref{A:Main} running on~$\ww$ returns a word of length at most $(2\nn{-}3)\lena{\wwt}$.

\bibliographystyle{ams-pln}
\bibliography{biblio}

\vspace{2em}

\noindent \textbf{Jean Fromentin}

Univ Lille Nord de France, F-59000 Lille, France

ULCO, LMPA J.~Liouville, B.P. 699, F-62228 Calais, France

CNRS, FR 2956, France

\url{fromentin@lmpa.univ-littoral.fr}

\vspace{2em}

\noindent\textbf{Luis Paris}

Universit\'e de Bourgogne

Institut de Math\'ematiques de Bourgogne, UMR~5584 du CNRS, 

B.P.~47870, 21078~Dijon~Cedex, France

\url{lparis@u-bourgogne.fr}

\end{document}